\documentclass[12pt,a4paper]{amsart}

\usepackage[utf8]{inputenc}
\usepackage[T1]{fontenc}
\usepackage[english]{babel}
\usepackage{amsmath, amssymb, amsthm, mathtools}
\usepackage{enumitem}
\usepackage[margin=2.5cm]{geometry}
\usepackage{xcolor}
\usepackage{hyperref}
\hypersetup{colorlinks=true, linkcolor=blue, citecolor=blue, urlcolor=blue}

\usepackage[authoryear]{natbib}
\bibpunct{(}{)}{;}{a}{}{,}

\usepackage{setspace}
\theoremstyle{plain}
\newtheorem{theorem}{Theorem}[section]
\newtheorem{proposition}[theorem]{Proposition}

\newtheorem{corollary}[theorem]{Corollary}

\theoremstyle{definition}

\newcommand{\PP}{\mathbb{P}}
\newcommand{\CC}{\mathbb{C}}

\newcommand{\ZZ}{\mathbb{Z}}
\newcommand{\Q}{\mathbb{Q}}
\newcommand{\Pdual}{\check{\PP}^{2}_{\CC}}
\newcommand{\Ptwo}{\PP^{2}_{\CC}}
\newcommand{\Pone}{\PP^{1}_{\CC}}
\newcommand{\Sing}{\operatorname{Sing}}

\newcommand{\Leg}{\operatorname{Leg}}
\newcommand{\Sym}{\operatorname{Sym}}
\newcommand{\Inv}{\mathrm{inv}}
\newcommand{\Tr}{\mathrm{tr}}
\newcommand{\Hcal}{\mathcal{H}}
\newcommand{\Fcal}{\mathcal{F}}
\newcommand{\Wcal}{\mathcal{W}}

\newcommand{\Tcal}{\mathcal{T}}
\newcommand{\GH}{\mathcal{G}_{\mathcal{H}}}
\newcommand{\GF}{\mathcal{G}_{\mathcal{F}}}

\newcommand{\Hwave}{\widetilde{\mathcal{H}}}
\newcommand{\Fwave}{\widetilde{\mathcal{F}}}
\newcommand{\boxprod}{\boxtimes}

\title{Homogeneous pre-foliations of co-degree one and degree four on the projective plane}

\author[C. Pracias]{Carla Pracias}
\address[C. Pracias]{Instituto de Matem\'atica e Estat\'istica, Universidade Federal Fluminense, Rua Professor Marcos Waldemar de Freitas Reis, s/n, CEP 24210-201\\
Bloco H, Campus do Gragoat\'a - Niter\'oi - RJ, Brasil}
\email{carla\_pracias@id.uff.br}

\author[M. Falla Luza]{Maycol Falla Luza}
\address[M. Falla Luza]{Instituto de Matem\'atica e Estat\'istica, Universidade Federal Fluminense, Rua Professor Marcos Waldemar de Freitas Reis, s/n, CEP 24210-201\\
Bloco H, Campus do Gragoat\'a - Niter\'oi - RJ, Brasil}
\email{hfalla@id.uff.br}

\date{}

\begin{document}

\maketitle

\begin{abstract}
We classify, up to projective automorphism, all homogeneous pre-foliations of co-degree $1$ and degree $4$ on the complex projective plane $\Ptwo$ whose Legendre transform defines a flat $4$-web. The classification is organized according to the type of the underlying homogeneous foliation $\Hcal$ of degree~$3$, distinguishing the cases $\deg(\Tcal_{\Hcal})=2$, $3$, and~$4$. The case $\deg(\Tcal_{\Hcal})=2$ was treated by Bedrouni, while the cases $\deg(\Tcal_{\Hcal})=3$ and $\deg(\Tcal_{\Hcal})=4$ are completed here. The proof combines Bedrouni's curvature-holomorphy criteria with explicit normal forms and symbolic computation; the result yields a finite list of explicit one-forms, parametrized by the ramification data of the Gauss map of~$\Hcal$, all displayed explicitly in the article (the few cases whose parameters are roots of higher-degree polynomials being written out in full in Appendix~\ref{app:reproducibility}).
\end{abstract}

\section{Introduction}\label{sec:intro}

Let $\Wcal$ be a $k$-web on a complex surface, that is, the local datum of $k$ pairwise transverse foliations away from a hypersurface called the discriminant. To each such object Blaschke and Dubourdieu associated a meromorphic $2$-form, the \emph{curvature} $K(\Wcal)$, whose vanishing is a defining feature of \emph{flat webs} (see~\citep[Chapters~1 and~2]{PP15} for a modern account). Flatness is a powerful constraint: by Mihaileanu's theorem, it is a necessary condition for $\Wcal$ to have maximal rank, and hence is closely tied to the existence of abelian relations and to the algebraization problem of Lie and Bol (see~\citep[Chapter~6]{PP15}). The set of flat $k$-webs of degree $d$ on the complex projective plane $\Ptwo$ is a Zariski closed subset of the moduli space $\Wcal(k,d)$ of $k$-webs of degree $d$, and one of the central problems in modern web geometry is to classify its irreducible components.

A productive source of webs on $\Ptwo$ is the \emph{Legendre transform}: starting from a foliation $\Fcal$ of degree $d$ on $\Ptwo$, one obtains a $d$-web $\Leg\Fcal$ on the dual plane $\Pdual$ whose leaves are the duals of the leaves of $\Fcal$. The interplay between the geometry of $\Fcal$ and the flatness of $\Leg\Fcal$ has attracted considerable attention. Marín and Pereira~\citep{MP13} proved that the Legendre transform of any reduced convex foliation on $\Ptwo$ is flat, and Bedrouni and Marín~\citep{BM18} classified all homogeneous foliations of degree~$3$ on $\Ptwo$ with flat Legendre transform: there are exactly eleven of them, up to automorphism of $\Ptwo$.

The natural next step is to allow the foliation to be enriched by a curve, i.e.\ to study \emph{pre-foliations}. A pre-foliation of co-degree $k$ and degree $d$ is the formal product $\Fwave = C\boxprod\Fcal$ of a reduced curve $C$ of degree $k$ and a foliation $\Fcal$ of degree $d-k$. The Legendre transform of $\Fwave$ is naturally a $d$-web on $\Pdual$, and Bedrouni~\citep{Bed23} initiated the study of those pre-foliations of co-degree~$1$ for which $\Leg\Fwave$ is flat. He provided, in particular, sharp curvature-holomorphy criteria along the irreducible components of the discriminant, in terms of the ramification data of the Gauss map of the underlying foliation, and he completed the classification in degree $4$ under the assumption $\deg(\Tcal_{\Hcal})=2$ (see~\citep[Section~4]{Bed23}). 

The present paper completes the classification of homogeneous pre-foliations of co-degree~$1$ and degree~$4$ on $\Ptwo$ with flat Legendre transform.

\begin{theorem}[Main Theorem]\label{thm:main}
Let $\Hwave = \ell\boxprod\Hcal$ be a homogeneous pre-foliation of co-degree~$1$ and degree~$4$ on $\Ptwo$, where $\Hcal$ is a homogeneous foliation of degree~$3$ and $\ell\subset\Ptwo$ is a line. The Legendre transform $\Leg\Hwave$ is a flat $4$-web if and only if, up to projective automorphism, $\Hwave$ appears in one of the lists given by Propositions~\ref{prop:Linf}, \ref{prop:degT3}, \ref{prop:degT4-3R1T1}, \ref{prop:degT4-2R12T1}, \ref{prop:degT4-1R13T1}, and~\ref{prop:degT4-4T1} below, together with the case $\deg(\Tcal_{\Hcal})=2$ classified in~\citep[Proposition~4.4]{Bed23}.
\end{theorem}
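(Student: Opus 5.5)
The plan is a case analysis on the relative position of the line $\ell$ and on the type of the homogeneous foliation $\Hcal$. We write $\Hcal$ as given by $\omega = A(x,y)dx + B(x,y)dy$ with $A,B$ homogeneous of degree $3$, and we use its Gauss map $\GH\colon\Pone\to\Pone$, a rational map of degree $3$. We also use the set $\Tcal_{\Hcal}$, the union of invariant lines through the origin together with the lines where $\GH$ ramifies. Since $\deg\GH = 3$, Riemann--Hurwitz gives a total ramification of $4$. Hence $\deg(\Tcal_{\Hcal})\in\{2,3,4\}$, and the possible ramification/invariance types form a finite list. For $\deg(\Tcal_{\Hcal})=4$ the types are $3R1T1$, $2R12T1$, $1R13T1$ and $4T1$ (the notation of Bedrouni and Marín), and for $\deg(\Tcal_{\Hcal})=3$ they are the analogous types with one point of higher ramification. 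For each type we fix an explicit normal form of $\Hcal$ with the fewest parameters. Bedrouni and Marín provide these, using the affine action fixing the origin to normalize three of the special lines to $x=0$, $y=0$, $x=y$.

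Next, since $\Hwave$ is homogeneous, flatness of $\Leg\Hwave$ forces $\ell$ to be either the line at infinity or a line through the origin. If $\ell$ were not of this form, the homogeneity (invariance under homotheties) would be broken, and the web would not be covered by Bedrouni's homogeneous setting. We would either justify this from the definition of homogeneous pre-foliation or reduce to it. The case $\ell = L_\infty$ is treated uniformly; this is Proposition~\ref{prop:Linf}. There Bedrouni's criterion reduces flatness to a condition on $\GH$ along its ramification components and along the radial component, and we check it type by type.

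For $\ell$ through the origin, $\ell=\{y=tx\}$, we apply Bedrouni's curvature-holomorphy criteria to each irreducible component of the discriminant of $\Leg\Hwave$. These components are the dual lines of points of $\Tcal_{\Hcal}$, the dual of $\ell$, and $L_\infty$. Each criterion becomes an algebraic condition in $t$ and the normal-form parameters. Holomorphy along the dual of a non-invariant ramification line of order $1$ is a local condition (the $T1$/$R1$ conditions). For the remaining components, and for the global vanishing, we compute $K(\Leg\Hwave)$ symbolically. We use that for homogeneous webs the curvature is determined by a rational function of one variable, so flatness amounts to the vanishing of finitely many polynomial coefficients. Solving these systems for each type, and quotienting by the residual automorphisms preserving the normal form (permutations of special lines), yields Propositions~\ref{prop:degT3}, \ref{prop:degT4-3R1T1}, \ref{prop:degT4-2R12T1}, \ref{prop:degT4-1R13T1} and~\ref{prop:degT4-4T1}. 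The case $\deg(\Tcal_{\Hcal})=2$ is quoted from \citep[Proposition~4.4]{Bed23}. Both directions of the theorem then follow: flatness implies membership in the lists by the necessary conditions, and conversely each listed form is verified flat by direct computation.

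The main obstacle is the generic type $4T1$ and, to a lesser extent, $1R13T1$. There the normal form keeps a free parameter, and the polynomial system in (parameter, $t$) has high degree, with solutions that are roots of irreducible higher-degree polynomials. To handle this, we would first impose the cheap local holomorphy conditions to cut down to finitely many candidates. We would then use resultants or Gröbner bases to eliminate $t$. Finally, we would discard solutions where components of $\Tcal_{\Hcal}$ collide (degenerate types) or where $\ell$ becomes $\Hcal$-invariant in a way that changes the discriminant structure, recording the surviving explicit forms.
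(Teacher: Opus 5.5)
Your overall architecture is the paper's: Riemann--Hurwitz enumeration of types, normal forms with critical points at $\infty,[0:1],[1:1]$, Proposition~\ref{prop:Linf} for $\ell=\ell_\infty$, \citep[Proposition~4.4]{Bed23} for $\deg(\Tcal_{\Hcal})=2$, then Bedrouni's criteria and elimination. However, two steps as you describe them would produce an incomplete list.

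First, the local conditions you want to use for pruning hold only under fiber hypotheses. Corollary~\ref{cor:Bed-T} along $\GH(T)$ requires that $T$ be the only non-fixed critical point in its fiber and that the direction of $\ell$ not lie in $\GH^{-1}(\GH(T))$. Corollary~\ref{cor:Bed-noninv} along $D_\ell$ requires that the fiber of $\ell$ contain no non-fixed critical point. When $\ell$ shares its $\GH$-fiber with a line of $I_{\Hcal}$ (for instance $\GH(\ell)=\GH(T)$), the separate conditions must be replaced by the single combined condition of Theorem~\ref{thm:Bed-Dl}. When the Gauss image of the direction of $\ell$ is $\infty=[1:0]$, even that formula is unavailable in the $(p,q)$-chart, and you must change charts. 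These loci are exactly where the following entries come from:
\begin{itemize}
\item (2.3), (3.4), (5.4) and (5') of Proposition~\ref{prop:degT3};
\item (9)--(10) of Proposition~\ref{prop:degT4-3R1T1};
\item (7)--(8) of Proposition~\ref{prop:degT4-2R12T1};
\item (6) of Proposition~\ref{prop:degT4-1R13T1}.
\end{itemize}
Imposing the generic $T_1$ condition uniformly in $t$ applies it exactly where it is not justified, so solutions on these loci can be lost. You must either split these loci off, as the paper does, or skip the pruning and require the full numerator of $K=h(p)\,dp\wedge dq/q$ to vanish with all parameters symbolic.

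Second, you propose to discard solutions where $\ell$ becomes $\Hcal$-invariant. Invariant lines are not degenerate; they form a separate sub-case. There $D_\ell$ is not a discriminant component, and by Theorem~\ref{thm:Bed-flatness} only holomorphy along $\GH(I^{\Tr}_{\Hcal})$ must be checked. They supply a large part of the lists: for example $p\in\{0,1,2\}$ in Proposition~\ref{prop:degT3}(1), via \citep[Corollary~D]{Bed23}. The unique solution of type $4\cdot T_1$ in Proposition~\ref{prop:degT4-4T1} is itself an invariant line.

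Several smaller points also need correcting.
\begin{itemize}
\item $\Tcal_{\Hcal}$ does not include the invariant lines through $O$. It records only the critical points of $\GH$ with their orders: fixed ones (radial singularities) and non-fixed ones (transverse inflection lines). Only with this definition does $\deg(\Tcal_{\Hcal})\in\{2,3,4\}$ follow from $\sum_P(\nu_P-1)=4$ with $\nu_P\in\{2,3\}$.
\item That $\ell$ is $\ell_\infty$ or passes through $O$ is the definition of a homogeneous pre-foliation, not a consequence of flatness.
\item The discriminant component attached to a non-invariant $\ell$ is $D_\ell=\GH(\ell)$, not the dual of $\ell$, which is a point.
\item No separate ``global vanishing'' step and no ``$R_1$ condition'' is needed. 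Theorem~\ref{thm:Bed-flatness} says flatness is equivalent to holomorphy along $\GH(I^{\Tr}_{\Hcal})$, together with $D_\ell$ when $\ell$ is non-invariant. This is what turns each sub-case into a few explicit equations.
\item For $\ell=\ell_\infty$ the actual mechanism is \citep[Theorem~4.5]{Bed23}: $\Leg(\ell_\infty\boxprod\Hcal)$ is flat iff $\Leg\Hcal$ is flat, after which one uses the list of \citep[Theorem~5.1]{BM18}. ``Checking type by type'' would amount to redoing that classification.
\end{itemize}
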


Let us make the contribution and its novelty precise. Bedrouni and Mar\'in~\citep{BM18} classified the homogeneous foliations of degree~$3$ on $\Ptwo$ with flat Legendre transform (eleven forms, up to automorphism), and Bedrouni~\citep{Bed23} initiated the study of co-degree~$1$ pre-foliations $\Hwave=\ell\boxprod\Hcal$ with flat dual, completing degree~$4$ under the assumption $\deg(\Tcal_{\Hcal})=2$. Since $\deg\Hcal=3$ forces $\deg(\Tcal_{\Hcal})\in\{2,3,4\}$ (Table~\ref{tab:types}), the cases $\deg(\Tcal_{\Hcal})=3$ and $4$ are exactly the ones left open in degree~$4$; they constitute the new content of this paper. In each of these cases the ramification data of the Gauss map $\GH$ (the position and order of its fixed and non-fixed critical points) fix a normal form for $\Hcal$ up to a Möbius change of the $\Pone$-parameter, which is induced by a projective automorphism of $\Ptwo$ preserving the homogeneous setting; the classification then reduces to determining the admissible lines $\ell$. These results form part of the first author's thesis~\citep{Pra25}; the present article gives a self-contained and computationally reproducible account of the two cases, the few cases whose parameters are roots of higher-degree polynomials and the exact symbolic verification being recorded in Appendix~\ref{app:reproducibility}. Counting one class per admissible pair modulo the residual automorphisms of each normal form, the lists of Propositions~\ref{prop:degT3}--\ref{prop:degT4-4T1} contain finitely many projective-equivalence classes; repetitions produced by residual automorphisms (for instance the Möbius transposition of two simple fixed critical points in case~(1) of Proposition~\ref{prop:degT3}) are removed as indicated in each proof.

We stress that the classification is not an experimental search. Bedrouni's curvature-holomorphy criteria reduce flatness of the Legendre transform to explicit conditions along the images of the transverse inflection divisor and, when the added line is non-invariant, along its Gauss image. After normalizing the critical points of the Gauss map by a projective change of coordinates, we derive polynomial conditions for each possible position of the added line and solve these systems exactly, by computational methods (specifically the \textsc{Maple} software). This yields a complete classification, modulo projective automorphisms, of homogeneous pre-foliations of co-degree one and degree four with flat Legendre transform.

The strategy is to organize the analysis according to the type $\Tcal_{\Hcal}$ of the homogeneous foliation $\Hcal$, a polynomial that records the orders of its radial singularities and transverse inflection lines. The relevant cases are $\deg(\Tcal_{\Hcal})\in\{2,3,4\}$. For each type, we exhibit a normal form for the defining $1$-form of $\Hcal$, parametriz
ed by the values of the Gauss map $\GH:\Pone\to\Pone$ at its non-fixed critical points; we then run through the possible choices for the line $\ell$ (the line at infinity, an invariant or non-invariant transverse inflection line, an inflection line whose Gauss image is or is not contained in the fiber of another inflection line) and apply the criteria of Bedrouni~\citep{Bed23} to obtain explicit polynomial conditions on the parameters. Solving the resulting algebraic systems yields a finite list of pre-foliations.

The article is organized as follows. Section~\ref{sec:prelim} recalls the definitions and notation used throughout. Section~\ref{sec:criteria} states the curvature-holomorphy criteria from~\citep{Bed23} on which our analysis rests. Section~\ref{sec:strategy} lists the possible types $\Tcal_{\Hcal}$ for $\deg\Hcal=3$ and explains the case-by-case strategy. Sections~\ref{sec:degT2}, \ref{sec:degT3}, and~\ref{sec:degT4} establish, respectively, the cases $\deg(\Tcal_{\Hcal})=2$ (where the line is the line at infinity), $\deg(\Tcal_{\Hcal})=3$, and $\deg(\Tcal_{\Hcal})=4$.

\section{Preliminaries}\label{sec:prelim}

This section fixes notation and recalls the basic objects involved. The reader familiar with web geometry on $\Ptwo$ may safely skip it and refer to~\citep{PP15, MP13, BM18, Bed23} for further details.

\subsection{Foliations on \texorpdfstring{$\Ptwo$}{P\^{}2}}

A (singular holomorphic) \emph{foliation} $\Fcal$ of degree $d$ on $\Ptwo$ is given, in homogeneous coordinates $[x:y:z]$, by a homogeneous $1$-form
\[
\Omega = a\,dx + b\,dy + c\,dz,\qquad a,b,c\in\CC[x,y,z]_{d+1},\ \gcd(a,b,c)=1,
\]
satisfying the Euler condition $\iota_R\Omega=0$, where $R=x\partial_x+y\partial_y+z\partial_z$. Equivalently, $\Fcal$ is given by a homogeneous vector field $Z$ of degree $d$ modulo the radial direction. The \emph{singular set} $\Sing\Fcal$ is the projectivisation of the locus where $\Omega$ (equivalently, $Z$ modulo $R$) vanishes; we always assume $\Fcal$ is \emph{saturated}, i.e.\ that $\Sing\Fcal$ has codimension at least~$2$.

The space $\Fcal(d)$ of foliations of degree $d$ on $\Ptwo$ is identified with a Zariski open subset of $\PP^{(d+2)^2-2}_{\CC}$.

\subsection{Homogeneous foliations}

A foliation $\Hcal$ of degree $d$ on $\Ptwo$ is \emph{homogeneous} if there exist affine coordinates $(x,y)$ in which $\Hcal$ is defined by
\[
\omega = A(x,y)\,dx + B(x,y)\,dy,\qquad A,B\in\CC[x,y]_d,\ \gcd(A,B)=1.
\]
The line at infinity $\ell_\infty=\{z=0\}$ is then invariant by $\Hcal$, and the origin $O=[0:0:1]$ is a \emph{radial singularity of $\Hcal$ of maximal order $d-1$}, in the following sense: since $A,B$ are homogeneous of degree $d$, the restriction of $\omega$ to a generic line $\{(at,bt)\}$ through $O$ equals $t^{d}\bigl(a\,A(a,b)+b\,B(a,b)\bigr)\,dt$, so a generic line through $O$ is tangent to $\Hcal$ at $O$ with multiplicity $\tau(\Hcal,O)=d$, i.e.\ $O$ has radial order $d-1$. The invariant lines through $O$ are exactly the components of $\{xA+yB=0\}$; in particular the vector field defining $\Hcal$ is \emph{not} proportional to the radial field $R=x\partial_x+y\partial_y$ (equivalently, $\gcd(A,B)=1$ makes $\Hcal$ saturated). The \emph{Gauss map} of $\Fcal$ is the rational map
\[
\GF : \Ptwo\dashrightarrow\Pdual,\qquad p\mapsto T_p\Fcal,
\]
whose homogeneous expression with respect to a defining $1$-form is $\GF(p)=[a(p):b(p):c(p)]$. For a homogeneous $\Hcal$, this descends, via the projection $[x:y:z]\mapsto[x:y]$, to a rational self-map
\[
\GH : \Pone\longrightarrow\Pone,\qquad [x:y]\longmapsto[-A(x,y):B(x,y)].
\]

\subsection{Inflection divisor and type}

The \emph{inflection divisor} of $\Fcal$ (Pereira~\citep{Per01}) is the divisor cut out by
\[
\det\begin{pmatrix} x & y & z\\ Z(x) & Z(y) & Z(z)\\ Z^2(x) & Z^2(y) & Z^2(z)\end{pmatrix}=0,
\]
and is denoted $I_{\Fcal}$. It splits as $I_{\Fcal}=I_{\Fcal}^{\Inv}+I_{\Fcal}^{\Tr}$, the invariant and transverse parts. A foliation is \emph{convex} if $I_{\Fcal}=I_{\Fcal}^{\Inv}$, and \emph{reduced convex} if moreover $I_{\Fcal}$ is reduced.

The determinant defining $I_{\Fcal}$ is the extactic divisor associated with the linear system of projective lines. More general extactic constructions, and their relation with algebraic integrability, invariant hypersurfaces, and multiplicities of invariant curves, are developed in the works of Pereira~\citep{Per01}, Corr\^ea~\citep{Cor11}, Corr\^ea--Maza--Soares~\citep{CMS15}, and Christopher--Llibre--Pereira~\citep{CLP07}; the last is particularly relevant to the relation between multiplicity and extactic curves.

For a homogeneous foliation $\Hcal$ defined by $\omega=A\,dx+B\,dy$, one has the explicit decomposition
\[
I_{\Hcal} = C_{\Hcal} + D_{\Hcal} + \ell_\infty,
\]
with
\[
C_{\Hcal}=\{xA+yB=0\},\qquad D_{\Hcal}=\{A_xB_y-A_yB_x=0\}.
\]
Here $C_{\Hcal}$ and $D_{\Hcal}$ denote the divisors cut out by the displayed equations; throughout, whenever a polynomial expression $F_{\Hcal}$ is used as a divisor we write $\{F_{\Hcal}=0\}$ for the associated curve.
The set $\Sigma^{\mathrm{rad}}_{\Hcal}$ of radial singularities of $\Hcal$ corresponds to the fixed critical points of $\GH$, and the transverse inflection lines $T_i=\{b_iy-a_ix=0\}$ correspond to the non-fixed critical points $[a_i:b_i]\in\Pone$ of $\GH$.

Following~\citep[Definition~2.3]{BM18}, the \emph{type} of $\Hcal$ is the polynomial
\[
\Tcal_{\Hcal} = \sum_{i=1}^m R_{\tau_i-1} + \sum_{j=1}^n T_{\rho_j-1} = \sum_{k=1}^{d-1}(r_k\cdot R_k + t_k\cdot T_k)\in\ZZ[R_1,\dots,R_{d-1},T_1,\dots,T_{d-1}],
\]
where $\tau_i-1$ is the radial order of the $i$-th radial singularity, $\rho_j-1$ is the inflection order of the $j$-th transverse inflection line, and $r_k$ (resp.\ $t_k$) counts these of order $k$. The \emph{degree of the type} is
\[
\deg(\Tcal_{\Hcal}) = \sum_{k=1}^{d-1}(r_k+t_k).
\]

\subsection{Pre-foliations}

A \emph{(holomorphic) pre-foliation} on $\Ptwo$ of co-degree $k\geq 1$ and degree $d\geq 3$ is the product
\[
\Fwave = C\boxprod\Fcal
\]
of a reduced projective curve $C\subset\Ptwo$ of degree $k$ and a foliation $\Fcal$ of degree $d-k$. In homogeneous coordinates, $\Fwave$ is given by a $1$-form $\omega=f\cdot\omega_0$, where $f$ is a homogeneous equation of $C$ and $\omega_0$ defines $\Fcal$. The pre-foliation is \emph{convex} (resp.\ \emph{reduced convex}) if $\Fcal$ is convex (resp.\ reduced convex) and $C$ is invariant by $\Fcal$.

\subsection{Webs and Legendre transform}

A \emph{$k$-web} $\Wcal$ on $\Ptwo$ is the datum of $k$ foliations in general position outside an algebraic hypersurface, the \emph{discriminant} $\Delta(\Wcal)$. Globally, $\Wcal$ corresponds to a section of $\Sym^k\Omega^1_{\Ptwo}\otimes\mathcal{N}$ for an appropriate line bundle $\mathcal{N}$. The web has \emph{degree} $d$ if a generic line $\ell\subset\Ptwo$ is tangent to $\Wcal$ at $d$ points.

The \emph{Legendre transform} $\Leg\Fcal$ of a foliation of degree $d$ on $\Ptwo$ is the $d$-web on $\Pdual$ whose leaves are the duals of the (germs of) leaves of $\Fcal$. More generally, if $\Fwave=C\boxprod\Fcal$ is a pre-foliation of degree $d$ and co-degree $k$, then
\[
\Leg\Fwave \;=\; \Leg C\boxprod\Leg\Fcal,
\]
which is a $d$-web on $\Pdual$ ($\Leg C$ being the dual curve, viewed as a $k$-web).

The discriminant of $\Leg\Hcal$ for a homogeneous $\Hcal$ admits the explicit description $\Delta(\Leg\Hcal)$ $= \GH(I_{\Hcal}) \cup \Check{\Sigma}_\Hcal^{\text{rad}} \cup \Check{O}$ on $\Pdual$, where $\Check{O}$ denotes the line in $\Check{\mathbb{P}}^2_{\mathbb{C}}$ formed by the points dual to the lines in $\mathbb{P}^2_{\mathbb{C}}$ passing through the origin $O$, made precise in~\citep[Lemma~4.6]{Bed23}.

\subsection{Curvature and flatness}

The Blaschke--Dubourdieu \emph{curvature} of a $3$-web $\Wcal=\Fcal_1\boxprod\Fcal_2\boxprod\Fcal_3$ is a meromorphic $2$-form $K(\Wcal)$ on the ambient surface, characterised by a covariance property under submersions. For a $k$-web with $k\geq 4$, the curvature is defined as the sum of the curvatures of all $3$-subwebs:
\[
K(\Wcal) = \sum_{1\leq i<j<l\leq k} K(\Fcal_i\boxprod\Fcal_j\boxprod\Fcal_l).
\]
The web $\Wcal$ is \emph{flat} if $K(\Wcal)=0$.

\section{Curvature-holomorphy criteria}\label{sec:criteria}

We collect here the criteria of Bedrouni~\citep{Bed23} that drive the classification. Throughout, for a line $\ell\subset\Ptwo$ we denote by $D_\ell:=\GH(\ell)\subset\Pdual$ the image of $\ell$ under the Gauss map $\GH$; when $\ell$ is non-invariant, $D_\ell$ is an irreducible component of the discriminant $\Delta(\Leg\Hwave)$. Consequently, the discriminant of the pre-foliation is $\Delta(\Leg\Hwave)=\Delta(\Leg\Hcal)\cup D_\ell$, and the symbol $\Delta(\Leg\,\cdot\,)$ is used with this same meaning for both $\Hcal$ and $\Hwave$. Finally, lines through the origin are written $\{y-px=0\}$ with $p\in\Pone$, under the convention $p=\infty\iff\{x=0\}$; the corresponding normal forms are interpreted accordingly.

\begin{theorem}[\protect{\citep[Theorem~3.7]{Bed23}}]\label{thm:Bed-flatness}
Let $\Hwave=\ell\boxprod\Hcal$ be a homogeneous pre-foliation with $\deg(\Hcal)=d\geq 2$. If $\ell$ is invariant (resp.\ non-invariant) by $\Hcal$, then $\Leg\Hwave$ is flat if and only if its curvature $K(\Leg\Hwave)$ is holomorphic along
\[
\GH(I_{\Hcal}^{\Tr}) \qquad \bigl(\text{resp.\ } \GH(I_{\Hcal}^{\Tr})\cup D_\ell = \GH(I_{\Hcal}^{\Tr}\cup\ell)\bigr).
\]
\end{theorem}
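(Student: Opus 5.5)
The plan is to use the fact that the curvature $K(\Leg\Hwave)$ is a meromorphic $2$-form on $\Pdual$ whose polar locus is contained in the discriminant $\Delta(\Leg\Hwave)=\Delta(\Leg\Hcal)\cup D_\ell$. Flatness is equivalent to holomorphy of $K$ on all of $\Pdual$: a holomorphic $2$-form on $\Pdual$ vanishes, since $h^0(\Pdual,K_{\Pdual})=0$. So it suffices to show that $K(\Leg\Hwave)$ is automatically holomorphic along every component of the discriminant other than those in $\GH(I_{\Hcal}^{\Tr})$ (together with $D_\ell$ in the non-invariant case).

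Using the description $\Delta(\Leg\Hcal)=\GH(I_{\Hcal})\cup\Check{\Sigma}_\Hcal^{\text{rad}}\cup\Check{O}$, I would split the components to be treated into three kinds.

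\textbf{The images of invariant inflection components.} These are $\GH(C_{\Hcal})$, $\GH(\ell_\infty)$, and $\GH(\ell)$ when $\ell$ is invariant. For an $\Hcal$-invariant line $L$, the Gauss image is the point dual to $L$, so it is not a curve and contributes no divisorial pole. Alternatively, one can argue as in Marín--Pereira: along the image of an invariant inflection component, one leaf of the web is totally geodesic. Either way, the curvature has no pole there. This is where homogeneity enters. Components of $C_{\Hcal}$ are lines through $O$, and the restriction of $\Hcal$ to them is explicit, so the local normal form of $\Leg\Hcal$ near such a component can be written down.

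\textbf{The dual lines $\check p$ for radial singularities $p$ and the line $\Check{O}$.} Here I would exploit the symmetry. The homotheties $(x,y)\mapsto(\lambda x,\lambda y)$ preserve $\Hcal$, and so does $\ell$ when $\ell$ passes through $O$ or $\ell=\ell_\infty$. In the remaining case I would reduce to a local computation. Dually, these homotheties act on $\Pdual$ fixing $\Check{O}$ pointwise. Near a generic point of $\check p$, the web decomposes as a product of a web tangent to the pencil $\check p$ and a transverse part. I would use Bedrouni's or Marín--Pereira's lemma on holomorphy of curvature along lines of the pencil: for a web $\Wcal_1\boxtimes\Wcal_2$ where $\Wcal_1$ is a radial-type web, holomorphy of $K$ along $\check p$ follows from a local computation with $\tau$-multiplicities.

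\textbf{The main obstacle.} I expect the hardest step to be the argument along $\Check{O}$ and along $\check p$ when $\ell$ is non-invariant and not through $O$. In that case the homogeneity symmetry is broken by $\ell$, and the curvature must be computed via its additivity over $3$-subwebs. Using that additivity, $K(\Leg\Hwave)=K(\Leg\Hcal)+\sum K(\Leg\ell\boxtimes\Fcal_i\boxtimes\Fcal_j)$, where the $\Fcal_i$ are the leaves of $\Leg\Hcal$ and $\Leg\ell$ is the pencil of lines through the point $\check\ell$. Each of these terms is then checked to be holomorphic along the component in question, using the explicit local models.

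Once all components outside the stated set are shown to be harmless, the theorem follows from the vanishing of holomorphic $2$-forms on $\Pdual$.
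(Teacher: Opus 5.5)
\textbf{The gap is at the line $\check{O}$.} Your reduction needs $K(\Leg\Hwave)$ to be holomorphic along $\check{O}$ \emph{unconditionally}, and this is false.

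Because $O$ is radial of maximal order $d-1$, every line through $O$ has all of its tangencies with $\Hcal$ concentrated at $O$. In the chart $(p,q)$ of lines $y=px-q$, the web $\Leg\Hcal$ is given by $\prod_{i}(dq-q\,t_i(p)\,dp)=0$, where $t_1,\dots,t_d$ are the roots of $A(t,pt-1)+pB(t,pt-1)$. So all $d$ local foliations are tangent along $\check{O}=\{q=0\}$, no factor is transverse to it, and every $3$-subweb has $\check{O}$ in its discriminant.

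The homothety symmetry you invoke then shows that the pole is really there. The map $(x,y)\mapsto(\lambda x,\lambda y)$ acts on $\Pdual$ by $(p,q)\mapsto(p,\lambda q)$, and invariance of the curvature forces $K(\Leg\Hcal)=g(p)\,dp\wedge dq/q$ with $g$ rational. The same holds for $K(\Leg\Hwave)$ when $\ell=\ell_\infty$ or $O\in\ell$. This form has a simple pole along $\check{O}$ as soon as $g\not\equiv 0$, i.e.\ as soon as the web is not flat. Concretely, for every homogeneous cubic $\Hcal$ not conjugate to one of the forms of Proposition~\ref{prop:Linf}, $K(\Leg(\ell_\infty\boxprod\Hcal))\neq 0$ is polar along $\check{O}$. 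For the same reason, the first term $K(\Leg\Hcal)$ in your additive decomposition is generically polar along $\check{O}$, so the term-by-term check you propose for the non-symmetric case cannot succeed either.

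\textbf{What is missing is a global step:} holomorphy of $K(\Leg\Hwave)$ on $\Pdual\setminus\check{O}$ already forces flatness. This is Bedrouni's Lemma~3.8, on which the proof cited in the paper rests. It is followed there by a case analysis ($\ell=\ell_\infty$, then $\ell\neq\ell_\infty$ invariant or not, with the discriminant described by Bedrouni's Lemma~4.6).

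In the symmetric cases the global step follows directly from the formula above. Holomorphy off $\{q=0\}$ makes $g$ a polynomial. In the chart $u=1/p$, $v=q/p$ one gets $K=-g(1/u)\,du\wedge dv/(u^{2}v)$, so holomorphy along $\{u=0\}$ forces $g=0$. Equivalently, $K$ would be a section of $\Omega^{2}_{\Pdual}(\check{O})\simeq\mathcal{O}_{\Pdual}(-2)$, which is zero.

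When $\ell$ neither passes through $O$ nor equals $\ell_\infty$, the symmetry is lost. You then need a separate argument controlling the pole of $K(\Leg\Hwave)$ along $\check{O}$; a pole of order at most $2$ would suffice, since $h^{0}(\mathcal{O}_{\Pdual}(k-3))=0$ for $k\le 2$. Your proposal offers nothing there.

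Your treatment of the lines $\check{s}$, $s\in\Sigma^{\mathrm{rad}}_{\Hcal}$, is also only gestured at, and it needs care when $s\in\ell$, since $\check{s}$ is then also a leaf of the pencil $\Leg\ell$. But it is the $\check{O}$ step that breaks the argument as written.
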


The proof, which we omit, relies on the fact that if $K(\Leg\Hwave)$ is holomorphic on $\Pdual\setminus\check{O}$, then $\Leg\Hwave$ is flat (\citep[Lemma~3.8]{Bed23}). The argument analyses each case separately, choosing first $\ell=\ell_\infty$ and then a line distinct from the line at infinity, distinguishing whether it is invariant or not. The discriminant $\Delta(\Leg\Hwave)$ is described in~\citep[Lemma~4.6]{Bed23} and depends on $\ell$.

\begin{theorem}[\protect{\citep[Theorem~3.13]{Bed23}}]\label{thm:Bed-comp}
Let $\Hwave=\ell\boxprod\Hcal$ be a homogeneous pre-foliation of co-degree~$1$ and degree $d\geq 3$ on $\Ptwo$, defined by
\[
\omega = (\alpha x+\beta y)\bigl(A(x,y)\,dx+B(x,y)\,dy\bigr),\qquad A,B\in\CC[x,y]_{d-1},\ \gcd(A,B)=1.
\]
Let $(p,q)$ be the affine chart of $\Pdual$ associated to the line $\{y=px-q\}\subset\Ptwo$, and let $D=\{p=p_0\}$ be an irreducible component of $\Delta(\Leg\Hcal)\setminus(D_\ell\cup\check{O})$. Write
$\GH^{-1}([p_0:1])=\{[a_1:b_1],\dots,[a_n:b_n]\}$ and let $\nu_i$ denote the ramification index of $\GH$ at $[a_i:b_i]$. Define
\begin{equation}\label{eq:PiQi}
\begin{aligned}
P_i(x,y;a_i,b_i) &:= \frac{\det\!\begin{pmatrix} A(x,y) & A(b_i,a_i)\\ B(x,y) & B(b_i,a_i)\end{pmatrix}}{(b_iy-a_ix)^{\nu_i}},\\
Q_i(x,y;a_i,b_i) &:= (\nu_i-1)(B_x-A_y)P_i + 2(\nu_i+1)\det\!\begin{pmatrix} \partial_x P_i & A\\ \partial_y P_i & B\end{pmatrix}.
\end{aligned}
\end{equation}
Then $K(\Leg\Hwave)$ is holomorphic along $D$ if and only if
\[
\sum_{i=1}^{n}\left[\Bigl(1-\tfrac{1}{\nu_i}\Bigr)(p_0b_i-a_i)\frac{Q_i(b_i,a_i;a_i,b_i)}{B(b_i,a_i)P_i(b_i,a_i;a_i,b_i)} + \frac{3\nu_i(\alpha+p_0\beta)}{\alpha b_i+\beta a_i}\right]=0.
\]
\end{theorem}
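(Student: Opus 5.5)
The plan is to compute the curvature locally near a generic point of $D=\{p=p_0\}$ and reduce holomorphy along $D$ to the vanishing of a single residue-type coefficient. First we describe $\Leg\Hwave$ near $D$. In the chart $(p,q)$, a line $\{y=px-q\}$ is tangent to $\Hcal$ at a point whose direction through the origin is $[x:y]$ with $\GH([x:y])=[p:1]$. Hence $\Leg\Hcal$ splits near a generic point of $D$ into germs of webs $\Wcal_1\boxprod\cdots\boxprod\Wcal_n$. Each $\Wcal_i$ is a $\nu_i$-web attached to the preimage $[a_i:b_i]\in\GH^{-1}([p_0:1])$, and its $\nu_i$ branches are permuted by monodromy around $D$, because $\GH$ ramifies with index $\nu_i$ there. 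The factor $\Leg\ell$ is the pencil of lines through the point $\check\ell$, which is a foliation transverse to $D$ away from $D_\ell$; this is why we excluded $D_\ell$ and $\check O$.

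Next we use the additivity of curvature over $3$-subwebs to split $K(\Leg\Hwave)$. There are three kinds of terms: those inside a single $\Wcal_i$, those mixing distinct $\Wcal_i,\Wcal_j$ (possibly with $\Leg\ell$), and those involving $\Leg\ell$. Subwebs built from foliations that are all transverse to $D$ and pairwise transverse have holomorphic curvature. So only the subwebs containing at least two branches of the same $\Wcal_i$ can have poles along $D$, and these poles are at most simple after symmetrisation. We will invoke the standard lemma (Marín--Pereira, as used in \citep{Bed23}) that for a web $\Wcal_i\boxprod\Wcal'$ with $\Wcal_i$ totally ramified of index $\nu_i$ along $D$ and $\Wcal'$ transverse, the polar part of $K$ along $D$ is governed by the second-order Puiseux data of $\Wcal_i$ together with the first-order interaction with each leaf of $\Wcal'$. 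Summing over $i$, we obtain holomorphy if and only if a sum $\sum_i(\text{self-term}_i+\text{interaction with }\Leg\ell)$ vanishes. The interactions with the other $\Wcal_j$ cancel after summation, as in the proof of Theorem~\ref{thm:Bed-flatness}.

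The concrete computation goes as follows. We parametrise the branches of $\Wcal_i$ by $[x:y]$ close to $[b_i:a_i]$ with $p-p_0\sim (b_iy-a_ix)^{\nu_i}$. The factor $(b_iy-a_ix)^{\nu_i}$ is exactly the vanishing of $\det\begin{pmatrix}A(x,y)&A(b_i,a_i)\\B(x,y)&B(b_i,a_i)\end{pmatrix}$, so $P_i$ is the unit that remains. We then expand the slopes of the leaves of $\Wcal_i$ in $(p,q)$ to second order in the uniformiser. Plugging this expansion into the Blaschke--Dubourdieu formula should produce the self-term $(1-1/\nu_i)(p_0b_i-a_i)\,Q_i/(B P_i)$ evaluated at $(b_i,a_i)$. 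Here the combination $(\nu_i-1)(B_x-A_y)P_i+2(\nu_i+1)\det(\nabla P_i,(A,B))$ arises as the derivative of $P_i$ along $\Hcal$ together with the divergence correction. The interaction of $\Wcal_i$ with the radial foliation $\Leg\ell$ contributes $\nu_i$ times the residue of a $3$-web made of two coalescing branches and one transverse leaf. Writing the direction of the leaf of $\Leg\ell$ through points of $D$ in terms of $\alpha,\beta,p_0$ and the tangency point $[b_i:a_i]$ gives $3\nu_i(\alpha+p_0\beta)/(\alpha b_i+\beta a_i)$.

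The main obstacle is the second-order Puiseux expansion of the $\nu_i$ coalescing branches and the bookkeeping of the resulting residue, so that it comes out precisely as $Q_i/(BP_i)$ with the factor $(1-1/\nu_i)$. I would first carry this out for $\nu_i=2$, where the branches are given by a square root and the classical formula for the curvature of a $3$-web with a fold is available. I would then extend to general $\nu_i$ using a $\nu_i$-th root of unity averaging argument: the polar coefficient is the sum over pairs of branches, and this sum reduces by symmetric-function identities to the stated closed form.
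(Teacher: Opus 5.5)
The paper gives no proof of this statement; it is quoted from \citep[Theorem~3.13]{Bed23}. Your proposal therefore has to stand on its own, and as written it does not prove the formula. The decisive step, identifying the polar coefficient with the displayed expression, is only announced (``should produce'', ``reduces by symmetric-function identities''). In addition, two of the claims that frame that step are false.

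\textbf{The poles are not simple.} In the chart $(p,q)$ the leaves of $\Leg\Hwave$ satisfy $dq/dp=q\,g(p)$ with $g=1/(p-t(p))$. Here $t$ runs over the slopes $a/b$ of the points $[a:b]\in\GH^{-1}([p:1])$, and $\Leg\ell$ has $g_0=1/(p+\alpha/\beta)$.
\begin{itemize}
\item With $s=\log q$ the web is defined by the closed forms $ds-g\,dp$.
\item A $3$-subweb with slopes $g_1,g_2,g_3$ then has curvature $b'(p)\,dp\wedge ds$, where $b=-\sum_r g_r'/\prod_{r'\neq r}(g_r-g_{r'})$.
\item Hence $K(\Leg\Hwave)=b_{\mathrm{tot}}'(p)\,dp\wedge dq/q$, where $b_{\mathrm{tot}}$ is the sum of the $b$'s over all triples.
\end{itemize}
So a pole along $D$ is never simple. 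Already $[dy^2-x\,dx^2]\boxprod[dy-dx]$ has curvature $-\tfrac{1}{2x^2}\,dx\wedge dy$ modulo holomorphic terms. Holomorphy along $D$ is the single condition $\operatorname{Res}_{p_0}b_{\mathrm{tot}}=0$ because the web is invariant under $q\mapsto\mu q$, not because of the order of the pole. A purely local Puiseux analysis like yours would have to control a double-pole coefficient and a simple-pole coefficient separately.

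\textbf{The interactions between different $\Wcal_i$ do not cancel.} Take two branches of $\Wcal_i$ and any other leaf $X$. Only the leading Puiseux terms contribute, and the residue of the corresponding $b$ is $-\nu_i^{-1}(\gamma_i-g_X(p_0))^{-1}$, where $\gamma_i=1/(p_0-t_i)$ is the common limit of $g$ on $\Wcal_i$. Summing over the $\nu_i(\nu_i-1)/2$ pairs and over $X$ gives
$-\tfrac{\nu_i-1}{2}\bigl[\sum_{k\neq i}\nu_k(\gamma_i-\gamma_k)^{-1}+(\gamma_i-g_0(p_0))^{-1}\bigr]$,
which involves no second-order data at all. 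This has three consequences.
\begin{itemize}
\item For $\nu_i=2$ there is no $3$-subweb inside $\Wcal_i$, so this mixed term is the entire contribution of $\Wcal_i$. One checks that $\sum_{k\neq i}\nu_k(\gamma_i-\gamma_k)^{-1}=(p_0b_i-a_i)\det(\partial P_i,(A,B))/(BP_i)$ at $(b_i,a_i)$. The other fibre points enter exactly through $P_i\propto\prod_{k\neq i}(b_ky-a_kx)^{\nu_k}$, so they cannot drop out.
\item Second-order Puiseux data enter only through triples inside $\Wcal_i$, hence only when $\nu_i\geq3$.
\item One has $(\gamma_i-g_0(p_0))^{-1}=(p_0b_i-a_i)(\alpha+p_0\beta)/(\alpha b_i+\beta a_i)$. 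So the $\ell$-contribution carries the factor $(\nu_i-1)(p_0b_i-a_i)$ and vanishes at unramified points.
\end{itemize}
Your count, ``$\nu_i$ times two coalescing branches plus one leaf'', therefore cannot produce the term $3\nu_i(\alpha+p_0\beta)/(\alpha b_i+\beta a_i)$ that the displayed formula attaches to every fibre point.

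That term, unlike the first one, is not even invariant under $(a_i,b_i)\mapsto(c\,a_i,c\,b_i)$. The version this computation produces has $(1-\tfrac1{\nu_i})(p_0b_i-a_i)$ multiplying the whole bracket, and $\nu_i-2$ instead of $\nu_i-1$ in $Q_i$, as in the worksheet of Appendix~\ref{app:reproducibility}. That version reduces to Corollary~\ref{cor:Bed-T}. It also reproduces, for example, $\lambda=\tfrac{1-4p}{2p-5}$ in case~(2.2) of Proposition~\ref{prop:degT3}.

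So your ``should produce'' hides a genuine discrepancy with the displayed formula. What is missing is the residue computation itself, including the triples inside $\Wcal_i$ for $\nu_i\geq3$, matched against $Q_i/(BP_i)$.
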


The determinant $\det\!\bigl(\begin{smallmatrix} A(x,y) & A(b_i,a_i)\\ B(x,y) & B(b_i,a_i)\end{smallmatrix}\bigr)$ in~\eqref{eq:PiQi} vanishes at $(x,y)=(b_i,a_i)$; since $[a_i:b_i]$ is a critical point of $\GH$ of ramification index $\nu_i$, the point $[b_i:a_i]$ is a zero of order exactly $\nu_i$ of this determinant along the line $\{b_iy-a_ix=0\}$, so the quotient $P_i$ by $(b_iy-a_ix)^{\nu_i}$ is a polynomial. Note the coordinate convention: the entries $A,B$ of the second column are evaluated at $(b_i,a_i)$ (not $(a_i,b_i)$), because the critical point $[a_i:b_i]\in\Pone$ of $\GH([x:y])=[-A:B]$ corresponds to the direction $\{b_iy-a_ix=0\}$, i.e.\ to the affine point $(x,y)=(b_i,a_i)$.

\begin{corollary}[\protect{\citep[Corollary~3.16]{Bed23}}]\label{cor:Bed-T}
Under the hypotheses of Theorem~\ref{thm:Bed-comp}, suppose that $\Hcal$ has a transverse inflection line $T=\{ax+by=0\}$ of order $\nu-1$, that $[-a:b]$ is the only non-fixed critical point of $\GH$ in $\GH^{-1}(\GH([-a:b]))$, and that $[-\alpha:\beta]\notin\GH^{-1}(\GH([-a:b]))$. Then $K(\Leg\Hwave)$ is holomorphic along $T'=\GH(T)$ if and only if
\begin{equation}\label{eq:cor35}
(\alpha b-\beta a)\,Q(b,-a;a,b) + 3\nu\bigl(\alpha B(b,-a)-\beta A(b,-a)\bigr)P(b,-a;a,b)=0,
\end{equation}
where $P,Q$ are as in~\eqref{eq:PiQi} with $\nu_i=\nu$.
\end{corollary}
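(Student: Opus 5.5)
The plan is to obtain Corollary~\ref{cor:Bed-T} by specialising Theorem~\ref{thm:Bed-comp} to the component $D=T'=\GH(T)$. Write $T'=\{p=p_0\}$. With the convention of Theorem~\ref{thm:Bed-comp}, the critical point $[-a:b]$ corresponds to $(a_1,b_1)=(-a,b)$. The associated affine point is therefore $(x,y)=(b,-a)$, and $p_0=-A(b,-a)/B(b,-a)$. Two facts follow at once:
\[
\alpha+p_0\beta=\frac{\alpha B(b,-a)-\beta A(b,-a)}{B(b,-a)},\qquad \alpha b_1+\beta a_1=\alpha b-\beta a .
\]
The hypothesis $[-\alpha:\beta]\notin\GH^{-1}(\GH([-a:b]))$ guarantees that $T'\neq D_\ell$ and that no denominator $\alpha b_i+\beta a_i$ vanishes. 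So Theorem~\ref{thm:Bed-comp} applies to $D=T'$.

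Next I would split the sum in Theorem~\ref{thm:Bed-comp} into the term $i=1$, where $\nu_1=\nu$, and the remaining terms $i\geq2$. The hypothesis says $[-a:b]$ is the only non-fixed critical point in the fibre. A fixed critical point in the fibre would have to be $[p_0:1]$ itself, which would make $T'$ part of $\check\Sigma^{\mathrm{rad}}_\Hcal$ rather than a component of type $\GH(I^{\Tr}_\Hcal)$; I expect to rule this out, or to check that such a point contributes in the same way. Granting this, $\nu_i=1$ for $i\ge2$. Then the factor $(1-1/\nu_i)$ kills the $Q_i$-terms for $i\geq 2$, and only the terms $3(\alpha+p_0\beta)/(\alpha b_i+\beta a_i)$ survive there.

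The key step, and the one I expect to be the main obstacle, is to evaluate the residual sum
\[
\Sigma:=\sum_{i\ge2}\frac{3(\alpha+p_0\beta)}{\alpha b_i+\beta a_i}
\]
in closed form. The fibre points $(b_i,a_i)$ are exactly the roots, with multiplicities $\nu_i$, of the binary form $F=A+p_0B$ of degree $d-1$. A sum of $\nu_i/L(r_i)$ over the roots of $F$, with $L=\alpha x+\beta y$, is a sum of residues of $dF/(F\,L)$ on $\Pone$. By the residue theorem it equals minus the residue at the zero $(\beta,-\alpha)$ of $L$, which is an explicit rational expression in $F(\beta,-\alpha)$ and its derivative. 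Subtracting the $i=1$ contribution, I would rewrite everything in terms of $A,B$ at $(b,-a)$ and of $P(b,-a;a,b)$. The relevant link is that $F=(bY+aX)^{\nu}\cdot P$ up to sign and normalisation, which follows from the definition of $P$ in~\eqref{eq:PiQi} with $(x,y)=(X,Y)$. I expect the contributions not localised at $T$ to cancel, leaving a multiple of $3\nu(\alpha+p_0\beta)/(\alpha b-\beta a)$. Computing this cancellation correctly, with the right signs and the right power of $(\alpha b-\beta a)$, is where I expect the real difficulty.

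Finally I would substitute $p_0b_1-a_1=p_0b+a$ and the formula for $p_0$ into the $i=1$ term $(1-1/\nu)(p_0b+a)\,Q/(B\,P)$. Then I would multiply the resulting condition by the nonzero factor $(\alpha b-\beta a)\,B(b,-a)\,P(b,-a;a,b)$, times whatever constant depends only on $\nu$. This clears all denominators and should yield precisely~\eqref{eq:cor35}. As a sanity check, I would verify the identity on an explicit homogeneous example of degree $3$ with a single $T_1$ line, for instance by symbolic computation.
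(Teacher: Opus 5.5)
You have the right starting point: specialise Theorem~\ref{thm:Bed-comp} to $D=T'$. Your identities $\alpha+p_0\beta=\bigl(\alpha B(b,-a)-\beta A(b,-a)\bigr)/B(b,-a)$ and $\alpha b_1+\beta a_1=\alpha b-\beta a$ are also exactly what the final step needs.

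\textbf{The residue step cannot work.} The step you flag as the main one, computing $\Sigma$ by residues and expecting a cancellation, fails. Normalise $b_i=1$, put $f(s)=A(1,s)+p_0B(1,s)$ and $s_\ell=-\alpha/\beta$, and assume $[1:0]$ is not in the fibre. The residue theorem then gives
\[
\sum_i\frac{\nu_i}{\alpha+\beta a_i}=-\frac{f'(s_\ell)}{\beta f(s_\ell)}.
\]
This quantity is localised at the direction of $\ell$, not at $T$. The resulting condition depends on the position of $\ell$ relative to every point of the fibre, whereas \eqref{eq:cor35} sees $\ell$ only through $(\alpha+p_0\beta)/(\alpha b-\beta a)$. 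There is a second mismatch in the term $i=1$. The factor $(1-\frac1\nu)(p_0b+a)$ multiplies $Q/(BP)$ but not $3\nu(\alpha+p_0\beta)/(\alpha b-\beta a)$, and it is not a constant depending only on $\nu$. So no final rescaling turns your condition into \eqref{eq:cor35}.

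\textbf{The source of the problem.} You read the displayed sum literally, and literally it is not even well defined on points of $\Pone$. The summand $3\nu_i(\alpha+p_0\beta)/(\alpha b_i+\beta a_i)$ is homogeneous of degree $-1$ in $(a_i,b_i)$, while $(p_0b_i-a_i)\,Q_i/(BP_i)$ has degree $0$. The consistent reading, and the one under which the corollary follows, puts the pencil term under the same prefactor, so that each summand is
\[
\Bigl(1-\frac{1}{\nu_i}\Bigr)(p_0b_i-a_i)\Bigl[\frac{Q_i}{BP_i}+\frac{3\nu_i(\alpha+p_0\beta)}{\alpha b_i+\beta a_i}\Bigr].
\]

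\textbf{Why the geometry forces this reading.} Along $T'$, poles of $K(\Leg\Hwave)$ can only come from $3$-subwebs containing two of the $\nu$ leaves of $\Leg\Hcal$ that merge there. The pencil $\Leg\ell$ is transverse to all leaves along $T'$, because $[-\alpha:\beta]\notin\GH^{-1}(\GH([-a:b]))$. Hence $3$-subwebs made of the pencil, unramified leaves and at most one leaf of the merging cluster are holomorphic along $T'$. Unramified fibre points therefore cannot carry a pencil term. Their influence on the cluster is already encoded in $P$, whose roots are exactly the other fibre points.

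Fixed critical points in the fibre are explicitly allowed by the hypothesis, so you should not try to rule them out. They drop out because $p_0b_i-a_i=0$ for them.

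\textbf{The repaired argument.} With this reading no residue computation is needed. Every term with $i\ge 2$ vanishes. For $i=1$ the prefactor $(1-\frac1\nu)(p_0b+a)$ is non-zero, since $\nu\ge 2$ and $[-a:b]$ is not fixed. Divide by this prefactor and multiply by the non-zero quantity $(\alpha b-\beta a)B(b,-a)P(b,-a;a,b)$; this gives \eqref{eq:cor35} at once. The symbolic sanity check you proposed would have exposed the discrepancy in your version.
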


\begin{theorem}[\protect{\citep[Theorem~3.18]{Bed23}}]\label{thm:Bed-Dl}
Under the hypotheses of Theorem~\ref{thm:Bed-comp}, write
\[
\GH^{-1}(\GH([-\alpha:\beta]))=\{[-\alpha:\beta],[a_1:b_1],\dots,[a_n:b_n]\},
\]
and let $\nu_i$ (resp.\ $\nu_0$) be the ramification index of $\GH$ at $[a_i:b_i]$ (resp.\ at $[-\alpha:\beta]$). Define $P_0,Q_0$ as in~\eqref{eq:PiQi} (with the appropriate substitutions $a_i\rightsquigarrow-\alpha,\ b_i\rightsquigarrow\beta,\ \nu_i\rightsquigarrow\nu_0$). Assuming that $\GH([-\alpha:\beta])\neq\infty$ and writing $\GH([-\alpha:\beta])=[p_0:1]$, then $K(\Leg\Hwave)$ is holomorphic along $D_\ell$ if and only if
{\footnotesize \[
1+\frac{1}{\nu_0}\frac{(\alpha+p_0\beta)Q_0(\beta,-\alpha;\alpha,\beta)}{B(\beta,-\alpha)P_0(\beta,-\alpha;\alpha,\beta)} + \sum_{i=1}^n\left[\Bigl(1-\tfrac{1}{\nu_i}\Bigr)(p_0b_i-a_i)\frac{Q_i(b_i,a_i;a_i,b_i)}{B(b_i,a_i)P_i(b_i,a_i;a_i,b_i)} + \frac{3\nu_i(\alpha+p_0\beta)}{\alpha b_i+\beta a_i}\right]=0.
\]}
\end{theorem}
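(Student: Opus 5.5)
The plan is to follow the scheme of the proof of Theorem~\ref{thm:Bed-comp}. The residue of $K(\Leg\Hwave)$ along a component of the discriminant splits into contributions of the local irreducible subwebs that ramify or become tangent along that component. Compared with Theorem~\ref{thm:Bed-comp}, the only new ingredient is that along $D_\ell$ the radial foliation $\Leg\ell$ itself becomes tangent to a subweb of $\Leg\Hcal$.

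\emph{Local setup.} In the affine chart $(p,q)$ of $\Pdual$, the web $\Leg\Hcal$ is described through the relation $\GH([x:px-q])=[p:1]$, i.e. $A(x,px-q)+pB(x,px-q)=0$, with leaves the integral curves of $dq=x\,dp$. The foliation $\Leg\ell$ is the pencil of lines through the point $\check\ell$. Near a generic point of $D_\ell=\{p=p_0\}$, I decompose $\Leg\Hcal$ as a product $\Wcal_0\boxprod\Wcal_1\boxprod\dots\boxprod\Wcal_n\boxprod\Wcal'$:
\begin{itemize}[label=--]
\item $\Wcal_i$ is the irreducible $\nu_i$-web formed by the leaves whose tangency points tend to the direction $\{b_iy-a_ix=0\}$;
\item $\Wcal_0$ is the $\nu_0$-web attached to $[-\alpha:\beta]$;
\item $\Wcal'$ is the remainder, which is smooth and transverse along $D_\ell$.
\end{itemize}
Each $\Wcal_i$ and $\Wcal_0$ is parametrized by a Puiseux variable $s$ with $s^{\nu}\sim p-p_0$. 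By the polarization formula $K(\Wcal)=\sum K(\text{3-subwebs})$ and the holomorphy of all terms in which no two foliations meet non-transversally along $D_\ell$, it suffices to compute the polar parts of the following three kinds of terms:
\begin{itemize}[label=(\roman*)]
\item the terms internal to each $\Wcal_i$ or $\Wcal_0$, together with those pairing one of them with a transverse foliation;
\item the terms involving $\Leg\ell$ and some $\Wcal_i$ with $i\geq1$;
\item the terms involving the pair $\Leg\ell$ and $\Wcal_0$.
\end{itemize}

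\emph{The $\Wcal_i$-terms.} For $i\geq1$, terms of types (i) and (ii) are exactly those already computed in Theorem~\ref{thm:Bed-comp}. Expanding $x$ as a series in $s$ and reading off the coefficient of $(p-p_0)^{-1}$ gives the term $(1-\frac1{\nu_i})(p_0b_i-a_i)\,Q_i/(BP_i)$. Pairing $\Wcal_i$ with $\Leg\ell$ gives the term $3\nu_i(\alpha+p_0\beta)/(\alpha b_i+\beta a_i)$, whose denominator measures the transversality of $\Leg\ell$ with the leaves of $\Wcal_i$. I would take these over verbatim.

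\emph{The $\Wcal_0$-terms and $\Leg\ell$.} The internal term of $\Wcal_0$ is computed by the same Puiseux expansion as in Theorem~\ref{thm:Bed-comp}, evaluated at $(x,y)=(\beta,-\alpha)$, with $P_0,Q_0$ built as in~\eqref{eq:PiQi}. The new feature is that the slope of $\Leg\ell$ equals the common limiting slope of the leaves of $\Wcal_0$. For this reason I would not treat the pair separately; instead I would regard $\Leg\ell\boxprod\Wcal_0$ as a single $(\nu_0+1)$-web degenerating along $D_\ell$.

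I would expand the slope of $\Leg\ell$ at $(p,q)$ to first order in $q-q_0$, and the slopes of $\Wcal_0$ in powers of $s$. The leading coincidence contributes the constant $1$, the universal residue for a smooth foliation tangent to a totally ramified web of the same limit slope. The first-order correction combines with the internal $\Wcal_0$ term to give $\frac{1}{\nu_0}(\alpha+p_0\beta)\,Q_0/(BP_0)$. The remaining pairings of $\Wcal_0$ with the other $\Wcal_i$ and with $\Wcal'$ are holomorphic, because those webs are transverse to $\Wcal_0$ along $D_\ell$.

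Finally I would sum all residues and clear the common nonzero factor. The hypothesis $\GH([-\alpha:\beta])\neq\infty$ ensures that $D_\ell=\{p=p_0\}$ lies in the chart. This yields the stated identity.

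\emph{Main obstacle.} The hardest step is the residue computation for the tangent pair $\Leg\ell\boxprod\Wcal_0$. Both the constant $1$ and the weight $1/\nu_0$, in place of $1-1/\nu_0$, must come out of a careful Puiseux expansion of order $\nu_0+1$. I would do this first in the model case $\nu_0=1$ with normalized coordinates $\alpha=0,\beta=1$, and then treat general $\nu_0$ by the substitution $p-p_0=s^{\nu_0}$, checking invariance under the residual Möbius changes.
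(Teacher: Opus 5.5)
\paragraph{Review.} The paper gives no proof of this statement; it is quoted from \citep{Bed23}. Your outline therefore has to stand on its own, and it does not. The decisive residue computation is never performed. Instead, the displayed terms are assigned to pieces of the web, and those assignments do not survive a check.

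\paragraph{Where the attributions fail.} In the chart $(p,q)$, every member of $\Leg\Hwave$ is a foliation $dq/q=\phi_j(p)\,dp$ with $\phi_j=1/(p-m_j(p))$. Here $m_j(p)$ runs over the slopes of the directions in $\GH^{-1}([p:1])$, and $m_\ell=-\alpha/\beta$ is constant for $\Leg\ell$. Hence, up to sign convention, $K(\Leg\Hwave)=H'(p)\,dp\wedge dq/q$, where $H=\sum_{i<j<k}h_{ijk}$ and $h_{ijk}=-\sum_{\mathrm{cyc}}\phi_i'/\bigl((\phi_i-\phi_j)(\phi_i-\phi_k)\bigr)$. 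Holomorphy along $D_\ell$ therefore means $\operatorname{Res}_{p_0}H=0$. The transverse variable is $p-p_0$, not $q-q_0$: $\Leg\ell$ and $\Wcal_0$ are tangent along the whole line $\{p=p_0\}$.
\begin{itemize}
\item Take $\nu_0=1$, with the other fibre points unramified. Then $\Leg\ell\boxprod\Wcal_0$ is a $2$-web with no $3$-subweb of its own. All of its polar part comes from the triples $\Leg\ell\boxprod\Wcal_0\boxprod\mathcal{G}$ with $\mathcal{G}$ transverse, each contributing $-1/(\phi_\ell(p_0)-\phi_{\mathcal{G}}(p_0))$. For instance, $\phi=0,\,p,\,c$ gives $h=-1/(p(p-c))$, with residue $1/c$.
\item So the pairings of $\Wcal_0$ with the $\Wcal_i$ and with $\Wcal'$ are \emph{not} holomorphic, and there is no universal constant $1$. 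The condition is $\sum_k 1/(\phi_\ell(p_0)-\phi_k(p_0))=0$, which is exactly $Q(\beta,-\alpha;\alpha,\beta)=0$ of Corollary~\ref{cor:Bed-noninv}.
\item For an unramified $[a_i:b_i]$, the foliation $\Wcal_i$ is smooth and transverse to $\Leg\ell$ along $D_\ell$. The only singular triples containing both have a leaf of $\Wcal_0$ as third member. These are of your type (iii) and have no counterpart in Theorem~\ref{thm:Bed-comp}.
\item In Theorem~\ref{thm:Bed-comp}, $\ell$ enters only through the barycentric sum of a \emph{ramified} cluster, as the term $1/(\phi_i(p_0)-\phi_\ell(p_0))$. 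It is therefore weighted by $(1-\frac{1}{\nu_i})(p_0b_i-a_i)$, not attached to unramified points.
\end{itemize}

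\paragraph{Why matching the display cannot work.} As printed, the display does not even specialize to Corollary~\ref{cor:Bed-noninv}.
\begin{itemize}
\item Take $\nu_0=1$, all $\nu_i=1$ and $b_i=1$. Then $Q_0(\beta,-\alpha;\alpha,\beta)=0$ is equivalent to $(\alpha+p_0\beta)\sum_i 1/(\alpha+\beta a_i)=d-2$. Under that condition the displayed left-hand side equals $1+3(d-2)\neq0$.
\item The summand $3\nu_i(\alpha+p_0\beta)/(\alpha b_i+\beta a_i)$ is not invariant under rescaling $(a_i,b_i)$, so it depends on the chosen representatives.
\item The same computation for general $\nu_0$ treats the cluster $\{\Leg\ell\}\cup\Wcal_0$. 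It contributes an internal term proportional to $(\nu_0^2-1)$ times the second Puiseux coefficient of its branches, plus $\frac{\nu_0+1}{2}$ times the barycentric sum over the other leaves.
\item The result has a factor $\bigl(1+\frac{1}{\nu_0}\bigr)$ in front of the $Q_0$-term, with $Q_0$ built from the coefficients $\nu_0-1$ and $2\nu_0+1$ of Corollary~\ref{cor:Bed-trans}. The factor $\bigl(1-\frac{1}{\nu_i}\bigr)(p_0b_i-a_i)$ multiplies the whole bracket.
\end{itemize}
The statement thus seems to have been garbled in transcription from \citep{Bed23}. Either way, the substance of a proof is exactly this residue computation. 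Your proposal defers it to its ``main obstacle'' and replaces it with assertions.
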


\begin{corollary}[\protect{\citep[Corollary~3.20]{Bed23}}]\label{cor:Bed-noninv}
Under the hypotheses of Theorem~\ref{thm:Bed-comp}, assume that $\ell$ is non-invariant by $\Hcal$ and that the fiber $\GH^{-1}(\GH([-\alpha:\beta]))$ contains no non-fixed critical point of $\GH$. Then $K(\Leg\Hwave)$ is holomorphic along $D_\ell=\GH(\ell)$ if and only if $Q(\beta,-\alpha;\alpha,\beta)=0$, where $P,Q$ are as in~\eqref{eq:PiQi} with $\nu=1$ and $a=-\alpha,\ b=\beta$.
\end{corollary}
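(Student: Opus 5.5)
The plan is to obtain this corollary as a specialization of Theorem~\ref{thm:Bed-Dl}. Two things have to be checked: that under the stated hypotheses every term of the sum over $i\geq 1$ simplifies, and that what remains collapses to a nonzero multiple of $Q_0(\beta,-\alpha;\alpha,\beta)$. First, if $\GH([-\alpha:\beta])=\infty$, I would apply a linear change of the coordinates $(x,y)$ fixing $O$; it preserves the homogeneous setting and moves this value to a finite $[p_0:1]$. The condition $Q=0$ is unaffected, up to a nonzero factor, since $P$ and $Q$ are built covariantly from $\omega$. So I may assume the hypotheses of Theorem~\ref{thm:Bed-Dl} hold.

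Next I would read off the ramification data in the fibre $\GH^{-1}([p_0:1])$. The lines through $O$ invariant by $\Hcal$ are exactly the fixed points of $\GH$. Since $\ell$ is non-invariant, $[-\alpha:\beta]$ is not fixed, so by hypothesis it is not critical and $\nu_0=1$. Among the other points $[a_i:b_i]$, at most one is fixed, namely $[p_0:1]$ itself. By hypothesis every non-fixed point is non-critical, so it has $\nu_i=1$ and its $(1-1/\nu_i)$-term vanishes. For the possible fixed point $[a_i:b_i]=[p_0:1]$, which may be a radial singularity with $\nu_i>1$, the factor $p_0b_i-a_i$ vanishes. Hence the condition of Theorem~\ref{thm:Bed-Dl} reduces to
\[
1+\frac{(\alpha+p_0\beta)\,Q_0(\beta,-\alpha;\alpha,\beta)}{B(\beta,-\alpha)\,P_0(\beta,-\alpha;\alpha,\beta)}+3(\alpha+p_0\beta)\sum_{i\geq1}\frac{\nu_i}{\alpha b_i+\beta a_i}=0 .
\]

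The key step is to show that $1+3(\alpha+p_0\beta)\sum_{i\geq1}\nu_i/(\alpha b_i+\beta a_i)=0$, so that only the $Q_0$ term survives. The fibre is the zero set of the binary form $F=A+p_0B$ of degree $d-1$, with multiplicities $\nu_i$, and $[-\alpha:\beta]$ is a simple root. Writing $F=(\alpha x+\beta y)\,G$, the sum $\sum_{i\geq 1}\nu_i/(\alpha b_i+\beta a_i)$ is a logarithmic derivative of $G$ at the simple root. By the elementary identity $g'(t_0)/g(t_0)=f''(t_0)/(2f'(t_0))$ for $f=(t-t_0)g$, it equals a ratio of the second and first derivatives of $F$ along $\ell$ at $(\beta,-\alpha)$. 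I would then compute $P_0$ explicitly with $\nu_0=1$: it is the quotient of $\det\bigl(\begin{smallmatrix}A&A(\beta,-\alpha)\\ B&B(\beta,-\alpha)\end{smallmatrix}\bigr)$ by $\alpha x+\beta y$, so $P_0(\beta,-\alpha)$ is expressed through the first derivative of $F$. Euler's formula for the homogeneous $A,B$ then relates these derivatives to the factor $\alpha+p_0\beta$.

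I expect the main obstacle to be this bookkeeping with Euler identities and normalisations. If the combination $1+3(\alpha+p_0\beta)\sum_{i\geq1}\nu_i/(\alpha b_i+\beta a_i)$ does not vanish on its own, the fallback is to write the whole left-hand side over the common denominator $B(\beta,-\alpha)P_0(\beta,-\alpha)$. I would then verify directly, using the formula $Q_0=4\det\bigl(\begin{smallmatrix}\partial_xP_0&A\\ \partial_yP_0&B\end{smallmatrix}\bigr)$ valid for $\nu_0=1$, that the numerator is a nonzero constant multiple of $Q_0(\beta,-\alpha;\alpha,\beta)$. Finally I would check that $\alpha+p_0\beta\neq0$ and $P_0(\beta,-\alpha)\neq0$; these follow from simplicity of the root and non-invariance of $\ell$, and they ensure that the equivalence with $Q=0$ is exact.
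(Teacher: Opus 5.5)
\textbf{There is a genuine gap: the key identity is false, and so is the fallback.} The paper only quotes this corollary from Bedrouni, so specializing Theorem~\ref{thm:Bed-Dl} is the natural route. Your treatment of the $(1-1/\nu_i)$-terms is correct: they vanish at the simple non-fixed points, and $p_0b_i-a_i=0$ at the fixed point $[p_0:1]$.

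\emph{Why the identity fails.} The claimed identity $1+3(\alpha+p_0\beta)\sum_{i\ge1}\nu_i/(\alpha b_i+\beta a_i)=0$ is not even invariant under rescaling the representatives $(a_i,b_i)$. In the natural normalization $\beta=b_i=1$, $s_0=-\alpha$, $[a_i:b_i]=[s_i:1]$, it reads $1+3(p_0-s_0)\Sigma=0$, where
$\Sigma=\sum_{i\ge1}\nu_i/(s_i-s_0)=-f''(s_0)/(2f'(s_0))$ and $f(s)=A(1,s)+p_0B(1,s)$. This is second-order data of $A,B$ at the direction of $\ell$, and no Euler relation ties it to $p_0-s_0$.

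\emph{Counterexample.} Take $A=y^3$, $B=-x^3$. Then $\GH(s)=s^3$, and the fibre of $p_0=s_0^3$ is $\{s_0,\rho s_0,\rho^2 s_0\}$ with $\rho$ a primitive cube root of unity. One finds $\Sigma=-1/s_0$, so $1+3(p_0-s_0)\Sigma=4-3s_0^2\not\equiv 0$.

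\emph{What your Euler computation actually proves.} Write $A+p_0B=(\alpha x+\beta y)G$, so that $P_0=B(\beta,-\alpha)\,G$. Normalizing $\beta=1$, your computation gives
\begin{align*}
\frac{Q_0(\beta,-\alpha;\alpha,\beta)}{B(\beta,-\alpha)\,P_0(\beta,-\alpha;\alpha,\beta)}
&=\frac{4\,(G_x+p_0G_y)(\beta,-\alpha)}{G(\beta,-\alpha)}
=4\bigl[(d-2)-(p_0-s_0)\Sigma\bigr]\\
&=-4\sum_{i\ge1}\nu_i\,\frac{p_0b_i-a_i}{\alpha b_i+\beta a_i}.
\end{align*}
The first and last members agree for arbitrary representatives. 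So $Q_0$ \emph{is} the fibre sum. It cannot survive once the fibre sum has been cancelled against the constant $1$, and the numerator in your fallback is not a multiple of $Q_0$. In the example (same normalization), the left-hand side of Theorem~\ref{thm:Bed-Dl} as displayed equals $4s_0^5-3s_0^2-4s_0+4$. This is $7$ at $s_0=\pm i$, exactly where $Q_0=12s_0^2(s_0^2+1)$ vanishes, and the curvature is in fact holomorphic along $D_\ell$ there. So the form of Theorem~\ref{thm:Bed-Dl} reproduced here cannot be specialized verbatim, and more bookkeeping will not close the argument.

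\textbf{The missing ingredient} is the correct criterion along $D_\ell$ under the corollary's hypotheses, namely $\sum_{i\ge1}\nu_i(p_0b_i-a_i)/(\alpha b_i+\beta a_i)=0$. You can take it from Bedrouni's original statement, or derive it directly as follows.
\begin{itemize}
\item With $u=\log q$, the web $\Leg\Hwave$ is $\prod_s\bigl(du-h_s(p)\,dp\bigr)$, where $h_s=1/(p-s)$ and $s\in\GH^{-1}(p)\cup\{s_0\}$.
\item A $3$-web of this shape has curvature $k'(p)\,dp\wedge du$ with $k=-\sum_j h_j'/\prod_{l\ne j}(h_j-h_l)$. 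Hence $K(\Leg\Hwave)=\kappa'(p)\,dp\wedge du$ with $\kappa$ meromorphic in $p$.
\item Near $p_0$, only the triples containing both $h_0=1/(p-s_0)$ and the branch of $\Leg\Hcal$ through $s_0$ give a pole. That pole is simple, with residue
\[
\sum_i\bigl(h_{s_i}(p_0)-h_0(p_0)\bigr)^{-1}=(p_0-s_0)\sum_{i\ge1}\nu_i\,\frac{p_0-s_i}{s_i-s_0},
\]
and the fixed point contributes nothing.
\item By the identity above, this residue vanishes if and only if $Q_0(\beta,-\alpha;\alpha,\beta)=0$, which is the corollary.
\end{itemize}
Your ingredients (logarithmic derivative at the simple root, Euler's formula) are the right ones. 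Their role, however, is to identify $Q_0$ with the fibre sum, not to make the fibre sum cancel.
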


\begin{corollary}[\protect{\citep[Corollary~3.25]{Bed23}}]\label{cor:Bed-trans}
Let $d\geq 3$ and let $\Hcal$ be a homogeneous foliation of degree $d-1$ on $\Ptwo$ defined by $\omega=A\,dx+B\,dy$ with $A,B\in\CC[x,y]_{d-1}$ and $\gcd(A,B)=1$. Suppose $\Hcal$ admits a transverse inflection line $\ell=\{\alpha x+\beta y=0\}$ of order $\nu-1$, and that $[-\alpha:\beta]$ is the only non-fixed critical point of $\GH$ in $\GH^{-1}(\GH([-\alpha:\beta]))$. Set $\Hwave:=\ell\boxprod\Hcal$. Then $K(\Leg\Hwave)$ is holomorphic along $D_\ell$ if and only if $Q(\beta,-\alpha;\alpha,\beta)=0$, where
\[
P=\frac{\det\!\begin{pmatrix} A & A(\beta,-\alpha)\\ B & B(\beta,-\alpha)\end{pmatrix}}{(\beta y+\alpha x)^{\nu}},\qquad
Q=(\nu-1)(B_x-A_y)P+(2\nu+1)\det\!\begin{pmatrix}\partial_x P & A\\ \partial_y P & B\end{pmatrix}.
\]
\end{corollary}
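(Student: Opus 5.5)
The plan is to obtain Corollary~\ref{cor:Bed-trans} as a specialization of Theorem~\ref{thm:Bed-Dl}, taking the added line to be the transverse inflection line itself.

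\textbf{Setting up the fiber.} Since $\ell=\{\alpha x+\beta y=0\}$ is a transverse inflection line, it is not invariant by $\Hcal$, so $D_\ell=\GH(\ell)$ is a component of the discriminant and Theorem~\ref{thm:Bed-Dl} applies. After a linear change of the coordinates $(x,y)$, which preserves homogeneity and transforms $A,B,P,Q$ covariantly, we may assume $\GH([-\alpha:\beta])=[p_0:1]\neq\infty$. The ramification index at $[-\alpha:\beta]$ is $\nu_0=\nu$. By hypothesis, every other point $[a_i:b_i]$ of the fiber $\GH^{-1}([p_0:1])$ is either not critical or a fixed critical point. In the latter case it would correspond to an invariant line of $\Hcal$, which forces $[a_i:b_i]=[p_0:1]$. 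Generically I will treat all $\nu_i=1$ and check the fixed-critical case separately. When $\nu_i=1$ the factor $(1-1/\nu_i)$ kills the $Q_i$-term, and the criterion of Theorem~\ref{thm:Bed-Dl} reduces to
\[
1+\frac{(\alpha+p_0\beta)\,Q_0(\beta,-\alpha)}{\nu\,B(\beta,-\alpha)P_0(\beta,-\alpha)}+3(\alpha+p_0\beta)\sum_{i}\frac{1}{\alpha b_i+\beta a_i}=0.
\]

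\textbf{Evaluating the sum by residues.} The key step is to compute $\sum_i 1/(\alpha b_i+\beta a_i)$ in closed form. The fiber points are the roots, in the direction variables $(x,y)=(b_i,a_i)$, of the binary form $F:=A+p_0B$ of degree $d-1$. Indeed, $F$ has a root of order exactly $\nu$ at $(\beta,-\alpha)$, i.e.\ $F=(\beta y+\alpha x)^\nu G$ with $G$ not divisible by $\beta y+\alpha x$, and the simple roots of $G$ are the $(b_i,a_i)$. The Euler relation together with the residue theorem on $\Pone$, applied to the rational function $\frac{1}{(\alpha x+\beta y)}\cdot\frac{G'}{G}$ suitably dehomogenized, expresses $\sum_i 1/(\alpha b_i+\beta a_i)$ through $G$ and its first derivatives at $(\beta,-\alpha)$. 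Next I rewrite $G$ via $P_0$: since $A(\beta,-\alpha)+p_0B(\beta,-\alpha)=0$, we have
\[
\det\!\begin{pmatrix}A & A(\beta,-\alpha)\\ B & B(\beta,-\alpha)\end{pmatrix}=B(\beta,-\alpha)\,F,
\]
so $P_0=B(\beta,-\alpha)\,G$. The first derivatives of $G$ at $(\beta,-\alpha)$ then produce exactly the term $\det\bigl(\begin{smallmatrix}\partial_xP & A\\ \partial_yP & B\end{smallmatrix}\bigr)(\beta,-\alpha)$, divided by $B\,P$.

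\textbf{Recombination and the main obstacle.} Substituting this expression back, the constant $1$, the contribution of $3\sum_i$, and the $2(\nu+1)$-coefficient determinant in $Q_0/\nu$ should combine after multiplying by $\nu B P_0$. The expected outcome is that the net coefficient of the determinant becomes $2\nu+1$ instead of $2(\nu+1)$, leaving a nonzero multiple of $(\alpha+p_0\beta)\,Q(\beta,-\alpha;\alpha,\beta)$ with $Q$ as in the statement. One must also check that $\alpha+p_0\beta\neq0$: otherwise $\ell$ would be tangent to its own Gauss image direction, i.e.\ invariant. The main obstacle is precisely this bookkeeping. The residue computation has to reproduce both the cancellation of the constant $1$ and the shift of the coefficient from $2(\nu+1)$ to $2\nu+1$, with the $(\nu-1)(B_x-A_y)P$ term left untouched. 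I would first verify the identity symbolically for $\nu=2$ and a generic cubic pair $(A,B)$, then run the general Euler-identity argument. Finally, I would handle the possible fixed critical point in the fiber separately and check that its contribution vanishes as well.
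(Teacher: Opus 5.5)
\textbf{There is a genuine gap.} The paper gives no proof of this statement; it quotes it from \citep[Corollary~3.25]{Bed23}. Your route is to specialize Theorem~\ref{thm:Bed-Dl} to $\nu_0=\nu$ and let the leftover terms recombine. For the statements as displayed in Section~\ref{sec:criteria}, the step you call bookkeeping is false.

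Put $L=\alpha x+\beta y$, $v=(\beta,-\alpha)$, $u=(1,p_0)$, $B_*=B(v)$ and $A+p_0B=L^{\nu}G$. Your identity $P_0=B_*G$ is correct. It gives $B(v)P_0(v)=B_*^2G(v)$ and $\det\bigl(\begin{smallmatrix}\partial_xP_0 & A\\ \partial_yP_0 & B\end{smallmatrix}\bigr)(v)=B_*^2\,\partial_uG(v)$. Since $Q_0-Q$ is exactly this determinant, the left-hand side of Theorem~\ref{thm:Bed-Dl} equals
\[
\frac{L(u)}{\nu B_*^2G(v)}\,Q(v)+R,\qquad R=1+\frac{L(u)\,\partial_uG(v)}{\nu\,G(v)}+3L(u)\sum_i\frac{1}{\alpha b_i+\beta a_i}.
\]
Here $R$ depends only on $A+p_0B$, i.e.\ on the fiber. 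But $Q(v)$ can be moved independently. Replace $(A,B)$ by $(A+p_0LK,\,B-LK)$ with $\deg K=d-2$. This leaves $A+p_0B$, $B_*$, $P$, the determinant at $v$, and all hypotheses unchanged. It shifts $(B_x-A_y)(v)$ by $-L(u)K(v)$, hence shifts $Q(v)$ by $-(\nu-1)L(u)K(v)P(v)\neq0$. Choosing $K$ so that $Q(v)=0$ shows that the equivalence you want forces $R=0$ on every admissible fiber. That already fails when $\nu=d-1$: the sum is empty, $G$ is constant, and $R=1$.

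Concretely, take case~(4) of Proposition~\ref{prop:degT3}, with $A=3a_3y^3-9a_3y^2x$, $B=3yx^2-x^3$ and $\ell=\{y=x\}$. Then:
\begin{itemize}
\item $A+3a_3B=3a_3(y-x)^3$, $P=6a_3$ and $p_0=3a_3$;
\item $(B_x-A_y)(1,1)=3+9a_3$.
\end{itemize}
Corollary~\ref{cor:Bed-trans} gives $a_3=-1/3$, which is case~(4.1). The displayed Theorem~\ref{thm:Bed-Dl} instead reduces to $1+(3a_3-1)(3a_3+1)=9a_3^2=0$, i.e.\ only the degenerate $a_3=0$.

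So no Euler or residue manipulation of $\sum_i 1/(\alpha b_i+\beta a_i)$ can produce the shift $2(\nu+1)\to2\nu+1$. That sum is not even well defined until the representatives $(a_i,b_i)$ are normalized. More importantly, the obstruction is the constant $1$, which is present even when the sum is empty. Your planned symbolic test would have exposed this.

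To prove the corollary you need one of two things:
\begin{itemize}
\item an independent local computation of the polar part of $K(\Leg\Hwave)$ along $D_\ell$, in the configuration where the leaf of $\Leg\ell$ is tangent to the $\nu$ leaves of $\Leg\Hcal$ that coalesce there;
\item a version of Theorem~\ref{thm:Bed-Dl} that is genuinely valid for $\nu_0\geq2$ and consistent with the corollary.
\end{itemize}
This is exactly the content the paper takes on trust from \citep{Bed23}.

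A smaller point: a fixed critical point $[p_0:1]$ in the fiber does not contribute zero. Only its $Q_i$-term vanishes, because $p_0b_i-a_i=0$; for the representative $(p_0,1)$ its other term equals $3\nu_i$.
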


\section{Strategy of proof and admissible types}\label{sec:strategy}

Our aim is to classify, up to projective automorphism, all homogeneous pre-foliations $\Hwave=\ell\boxprod\Hcal$ of co-degree~$1$ and degree~$4$ on $\Ptwo$ for which $\Leg\Hwave$ is flat. Since $\deg\Hcal=3$, the type $\Tcal_{\Hcal}$ is constrained by the system
\begin{equation}\label{eq:type-system}
\begin{cases}
2r_2+r_1+2t_2+t_1=4,\\
r_2+r_1+t_2+t_1\leq 4,\\
r_1+r_2\leq 3,
\end{cases}
\end{equation}
whose admissible solutions are listed in Table~\ref{tab:types}. The first equation is the Riemann--Hurwitz relation for the Gauss map $\GH:\Pone\to\Pone$ of a homogeneous foliation of degree~$3$: this map has degree $\deg\Hcal=3$, so its total ramification is $\sum_{P}(\nu_P-1)=2\cdot 3-2=4$. Grouping the critical points according to whether they are fixed (radial singularities, contributing $R_k$ with $\tau-1=k$) or non-fixed (transverse inflection lines, contributing $T_k$ with $\rho-1=k$), and since for $\deg\Hcal=3$ every critical point has order $\tau-1\in\{1,2\}$ (i.e.\ $\nu_P\in\{2,3\}$), the ramification sum reads $2r_2+r_1+2t_2+t_1=4$, where $r_k$ (resp.\ $t_k$) counts the fixed (resp.\ non-fixed) critical points of order~$k$. The second inequality bounds the total number of critical points by $2\deg\Hcal-2=4$, and the third records that a degree-$3$ homogeneous foliation has at most $\deg\Hcal=3$ radial singularities. The finitely many non-negative integer solutions of~\eqref{eq:type-system} are exactly the entries of Table~\ref{tab:types}, which is therefore exhaustive.

\begin{table}[htbp]
\caption{Possible types $\Tcal_{\Hcal}$ for $\deg\Hcal=3$.}\label{tab:types}
\centering
\begin{tabular}{c|c|c}
$\deg(\Tcal_{\Hcal})=2$ & $\deg(\Tcal_{\Hcal})=3$ & $\deg(\Tcal_{\Hcal})=4$\\\hline
$2\cdot R_2$ & $2\cdot R_1+1\cdot R_2$ & $3\cdot R_1+1\cdot T_1$ \\
$2\cdot T_2$ & $1\cdot R_1+1\cdot R_2+1\cdot T_1$ & $2\cdot R_1+2\cdot T_1$ \\
$1\cdot R_2+1\cdot T_2$ & $1\cdot R_2+2\cdot T_1$ & $1\cdot R_1+3\cdot T_1$ \\
& $2\cdot R_1+1\cdot T_2$ & $4\cdot T_1$ \\
& $1\cdot R_1+1\cdot T_1+1\cdot T_2$ & \\
& $2\cdot T_1+1\cdot T_2$ & \\
\end{tabular}
\end{table}

For each admissible type, we proceed by examining the possible choices for the line $\ell$:
\begin{enumerate}[leftmargin=*]
\item $\ell=\ell_\infty$ (line at infinity);
\item $\ell$ a transverse inflection line of $\Hcal$, in which case Corollary~\ref{cor:Bed-trans} is applied to~$\ell$ and Corollary~\ref{cor:Bed-T} to the remaining lines in $I_{\Hcal}$;
\item $\ell=\{y-px=0\}$ a line that is not a component of $I_{\Hcal}^{\Tr}$, corresponding to the point $[p:1]\in\Pone$:
\begin{enumerate}
\item if $[p:1]$ does not lie in the fiber $\GH^{-1}(\GH([p:1]))$ together with a non-fixed critical point of $\GH$ (equivalently, $D_\ell=\GH(\ell)$ is not contained in the fiber of $\GH$ through the image of a line of $I_{\Hcal}$), Corollary~\ref{cor:Bed-T} is applied to the remaining transverse lines, and:
\begin{itemize}
\item if $\ell$ is invariant: take $\ell$ among the components of $C_{\Hcal}$;
\item if $\ell$ is non-invariant: apply Corollary~\ref{cor:Bed-noninv} to~$\ell$;
\end{itemize}
\item if $\GH([p:1])=\GH([-a:b])$ for some line $\{ax+by=0\}$ of $I_{\Hcal}$ (i.e.\ $\ell$ and that line share the same fiber of $\GH$), apply Theorem~\ref{thm:Bed-Dl} to $\ell$ and to that line, and Corollary~\ref{cor:Bed-T} to the remaining lines in $I_{\Hcal}$.
\end{enumerate}
\end{enumerate}

A remark on normalizations and equivalence. The parametrizing $\Pone$ carries the action of $\GH$; a M\"obius transformation of $\Pone$ that moves the critical points of $\GH$ to prescribed positions is induced by the linear automorphism $\mathrm{diag}$-type action $[x:y:z]\mapsto[\varphi(x,y):z]$ of $\Ptwo$, where $\varphi\in\mathrm{PGL}_2$ is the corresponding change of the $(x,y)$-variables; such an automorphism fixes $\ell_\infty=\{z=0\}$ and the origin, hence preserves the homogeneous setting. We use the terms ``up to projective automorphism'', ``projectively equivalent'', and ``up to conjugation'' interchangeably: two pre-foliations are declared equivalent precisely when one is carried to the other by an automorphism of $\Ptwo$, and for homogeneous pre-foliations this automorphism may be taken of the above form. Whenever two listed normal forms are declared conjugate, the relevant transformation is the M\"obius map permuting the corresponding critical points.

The case $\deg(\Tcal_{\Hcal})=2$ was treated in~\citep[Proposition~4.4]{Bed23}; we recall the corresponding statement in Section~\ref{sec:degT2}, together with the case $\ell=\ell_\infty$ of arbitrary type. The new content of this paper is the analysis of $\deg(\Tcal_{\Hcal})\in\{3,4\}$, carried out in Sections~\ref{sec:degT3} and~\ref{sec:degT4}.

\section{Case \texorpdfstring{$\ell=\ell_\infty$}{l = l\_inf} and \texorpdfstring{$\deg(\Tcal_\Hcal)=2$}{deg(T)=2}}\label{sec:degT2}

The case $\deg(\Tcal_{\Hcal})=2$ was treated in~\citep[Proposition~4.4]{Bed23}. For the convenience of the reader, and so that Theorem~\ref{thm:main} is readable without reconstructing the list from~\citep{Bed23}, we recall its content. When $\deg(\Tcal_{\Hcal})=2$ the admissible types are $2\cdot R_2$, $2\cdot T_2$, and $1\cdot R_2+1\cdot T_2$ (first column of Table~\ref{tab:types}); by~\citep[Proposition~4.4]{Bed23}, a homogeneous pre-foliation $\ell\boxprod\Hcal$ of co-degree~$1$ and degree~$4$ with $\deg(\Tcal_{\Hcal})=2$ has flat Legendre transform if and only if, up to projective automorphism, either $\ell=\ell_\infty$ and $\Hcal$ is one of the eleven foliations of Proposition~\ref{prop:Linf}, or $\ell\boxprod\Hcal$ is one of the finitely many pre-foliations explicitly listed in~\citep[Proposition~4.4]{Bed23}, obtained by adjoining to the foliations of type $2\cdot R_2$, $2\cdot T_2$, $1\cdot R_2+1\cdot T_2$ an invariant or transverse-inflection line as prescribed there. We complement this with the case $\ell=\ell_\infty$ for arbitrary type. By~\citep[Theorem~4.5]{Bed23}, $\Leg(\ell_\infty\boxprod\Hcal)$ is flat if and only if $\Leg\Hcal$ is flat. The classification of homogeneous foliations of degree~$3$ on $\Ptwo$ with flat Legendre transform was carried out by Bedrouni and Marín in~\citep[Theorem~5.1]{BM18}, and yields the following list.

\begin{proposition}[{\citep[Theorem~5.1]{BM18}}]\label{prop:Linf}
Let $\Hcal$ be a homogeneous foliation of degree~$3$ on $\Ptwo$ defined by $\omega=A\,dx+B\,dy$. Then $\Leg(\ell_\infty\boxprod\Hcal)$ is flat if and only if $\Hcal$ is, up to conjugation, defined by one of the eleven $1$-forms
\begin{enumerate}[leftmargin=*,label={\rm(\arabic*)}]
\item $\omega_1=y^3\,dx-x^3\,dy$,
\item $\omega_2=x^3\,dx-y^3\,dy$,
\item $\omega_3=y^2(3x+y)\,dx-x^2(x+3y)\,dy$,
\item $\omega_4=y^2(3x+y)\,dx+x^2(x+3y)\,dy$,
\item $\omega_5=2y^3\,dx+x^2(3y-2x)\,dy$,
\item $\omega_6=(4x^3-6x^2y+4y^3)\,dx+x^2(3y-2x)\,dy$,
\item $\omega_7=y^3\,dx+x(3y^2-x^2)\,dy$,
\item $\omega_8=x(x^2-3y^2)\,dx-4y^3\,dy$,
\item $\omega_9=y^2((-3+i\sqrt{3})x+2y)\,dx+x^2((1+i\sqrt{3})x-2\sqrt{3}\,y)\,dy$,
\item $\omega_{10}=(3x+\sqrt{3}\,y)y^2\,dx+(3y-\sqrt{3}\,x)x^2\,dy$,
\item $\omega_{11}=(3x^3+3\sqrt{3}\,x^2y+3xy^2+\sqrt{3}\,y^3)\,dx+(\sqrt{3}\,x^3+3x^2y+3\sqrt{3}\,xy^2+3y^3)\,dy$.
\end{enumerate}
\end{proposition}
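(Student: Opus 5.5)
The plan is to deduce Proposition~\ref{prop:Linf} from two earlier results without redoing the curvature computation. The first step is to invoke~\citep[Theorem~4.5]{Bed23}, quoted just before the statement: for a homogeneous foliation $\Hcal$, $\Leg(\ell_\infty\boxprod\Hcal)$ is flat if and only if $\Leg\Hcal$ is flat. To make this self-contained from the criteria of Section~\ref{sec:criteria}, I would argue as follows. Since $\ell_\infty$ is invariant by $\Hcal$, Theorem~\ref{thm:Bed-flatness} says flatness of $\Leg\Hwave$ is equivalent to holomorphy of $K(\Leg\Hwave)$ along $\GH(I_{\Hcal}^{\Tr})$. The components of $\GH(I^{\Tr}_{\Hcal})$ are the lines $\{p=p_0\}$ with $[p_0:1]$ the image of a non-fixed critical point.

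Next I would specialize Theorem~\ref{thm:Bed-comp} to $\ell=\ell_\infty$. Formally this corresponds to $\alpha=\beta=0$ after dehomogenizing the factor $z$; equivalently, the added factor contributes no term $3\nu_i(\alpha+p_0\beta)/(\alpha b_i+\beta a_i)$. The holomorphy condition along each component $D$ then reduces to
\[
\sum_i\Bigl(1-\tfrac{1}{\nu_i}\Bigr)(p_0b_i-a_i)\,\frac{Q_i}{B P_i}=0 .
\]
I expect this to be exactly the Bedrouni--Marín holomorphy criterion for $K(\Leg\Hcal)$ along $D$. The justification is that the dual of $\ell_\infty$ is the point $\check O$, so $\Leg\ell_\infty$ is the radial pencil through a point. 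A pencil of lines through a point adds to each $3$-subweb only a term whose poles lie on $\check O$ and on $\GH$-images of invariant lines, not on the transverse ones. Since Lemma~3.8 of~\citep{Bed23} discards $\check O$, the two flatness conditions coincide.

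The final step is to quote~\citep[Theorem~5.1]{BM18}: a homogeneous degree-$3$ foliation has flat Legendre transform if and only if it is conjugate to one of $\omega_1,\dots,\omega_{11}$. Combined with the equivalence above, this gives the list. Conjugation here is by automorphisms $[x:y:z]\mapsto[\varphi(x,y):z]$, which fix $\ell_\infty$, as remarked in Section~\ref{sec:strategy}. So equivalence classes of $\Hcal$ and of $\ell_\infty\boxprod\Hcal$ correspond bijectively.

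The main obstacle is the equivalence step, namely checking that adjoining the pencil $\Leg\ell_\infty$ introduces no new obstruction along $\GH(I^{\Tr}_{\Hcal})$. If the limiting argument with $\alpha=\beta=0$ is not rigorous, I would fall back on quoting~\citep[Theorem~4.5]{Bed23} directly.
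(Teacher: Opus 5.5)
Your actual argument is exactly the paper's: reduce to flatness of $\Leg\Hcal$ via \citep[Theorem~4.5]{Bed23}, then read off the list from \citep[Theorem~5.1]{BM18}. With your fallback the proof is complete. However, the self-contained justification you sketch for the first step does not work and should not be presented as a proof. There are three concrete problems.

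(i) $\check O$ is the line of $\Pdual$ dual to the origin. The dual of $\ell_\infty$ is a point not on $\check O$. In the chart $(p,q)$, $\Leg\ell_\infty$ is the foliation $\{dp=0\}$, so the components $\{p=p_0\}$ of $\GH(I^{\Tr}_{\Hcal})$ along which you test holomorphy are themselves leaves of it. Your claim that the new $3$-subwebs only create poles on $\check O$ and on images of invariant lines is therefore exactly what has to be proved.

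(ii) Theorem~\ref{thm:Bed-comp} concerns only lines $\{\alpha x+\beta y=0\}$ through $O$. $\ell_\infty$ is neither one of them nor a limit of them, and $(\alpha,\beta)=(0,0)$ kills the $1$-form. Moreover, the line term $3\nu_i(\alpha+p_0\beta)/(\alpha b_i+\beta a_i)$ is homogeneous of degree $0$ in $(\alpha,\beta)$, so it becomes $0/0$, not $0$.

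(iii) Even granting the truncation, nothing guarantees that the $(\alpha,\beta)$-independent part of that formula is the polar part of $K(\Leg\Hcal)$. The $3$-subwebs containing $\Leg\ell$ may also contribute $(\alpha,\beta)$-independent terms, which are then merged into the $Q_i$. So the identification with the Bedrouni--Mar\'in criterion is unsupported.

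If you want the reduction without citing \citep{Bed23}, use the homotheties instead.
\begin{itemize}
\item The map $(x,y)\mapsto(\lambda x,\lambda y)$ preserves $\Hcal$ and acts on $\Pdual$ by $(p,q)\mapsto(p,\lambda q)$.
\item Hence $X=q\partial_q$ is an infinitesimal automorphism of $\Leg\Hcal$, transverse to it where $q\neq 0$. Its orbit foliation is exactly $\Leg\ell_\infty=\{dp=0\}$.
\item Take coordinates $u=\log q$, $v=p$. The tangency point of the line $(p,\lambda q)$ is $\lambda$ times that of $(p,q)$, so each local foliation of $\Leg\Hcal$ reads $du=g_i(v)\,dv$.
\item Consequently every $3$-subweb $\Fcal_i\boxprod\Fcal_j\boxprod\Leg\ell_\infty$ has first integrals $v$, $u-G_i(v)$, $u-G_j(v)$. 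Setting $s=u-G_i$ and $w=G_i-G_j$, these become $w$, $s$, $s+w$, so the subweb is hexagonal.
\end{itemize}
Summing over $3$-subwebs gives $K(\Leg(\ell_\infty\boxprod\Hcal))=K(\Leg\Hcal)$ identically. This is the equivalence you need, with no limiting argument and no appeal to Theorem~\ref{thm:Bed-flatness}.
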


\section{Classification for \texorpdfstring{$\deg(\Tcal_\Hcal)=3$}{deg T = 3}}\label{sec:degT3}

In this section we treat the six admissible types of $\Hcal$ with $\deg(\Tcal_{\Hcal})=3$. Throughout, the pre-foliation is written as $\omega=(y-px)\bigl(A(x,y)\,dx+B(x,y)\,dy\bigr)$ with
\[
A=a_3y^3+a_2y^2x+a_1yx^2+a_0x^3,\qquad B=b_3y^3+b_2y^2x+b_1yx^2+b_0x^3
\]
and $\gcd(A,B)=1$.

\begin{proposition}\label{prop:degT3}
Let $\ell\boxprod\Hcal$ be a homogeneous pre-foliation of co-degree~$1$ and degree~$4$ on $\Ptwo$ with $\deg(\Tcal_{\Hcal})=3$. The web $\Leg(\ell\boxprod\Hcal)$ is flat if and only if $\ell\boxprod\Hcal$ is projectively equivalent to one of the pre-foliations in the following list:
\begin{enumerate}[leftmargin=*,label={\rm(\arabic*)}]
\item $(y-px)\bigl(y^3\,dx+x^2(2x-3y)\,dy\bigr)$, with $p\in\{0,1,2,-2,2/3\}$.
\item $(y-px)\bigl(\lambda(2y^3-3y^2x)\,dx+x^3\,dy\bigr)$, where:
\begin{enumerate}[label={\rm 2.\arabic*.}]
\item $p=1,\ \lambda=-7/5$;
\item $p\in\{0,-1/2,3/2,5/4,2/3,-1/8\pm\tfrac{\sqrt{39}}{8}i,\infty\}$ and $\lambda=\tfrac{1-4p}{2p-5}$;
\item $(p,\lambda)\in\{(-1/2,-1/2),\ (3/2,5/2)\}$.
\end{enumerate}
\item $(y-px)\bigl(((\lambda-\theta)(2y^3-3y^2x)-\theta x^3)\,dx+x^3\,dy\bigr)$, where:
\begin{enumerate}[label={\rm 3.\arabic*.}]
\item $p=1,\ \theta=6/5,\ \lambda=-7/5$;
\item $p\in\{-1/2,1/2,\infty\}$, $\theta=\tfrac{6p}{2p+3}$, $\lambda=-\tfrac{4p-1}{2p-5}$;
\item $p\in\bigl\{\tfrac{1}{2}\pm\tfrac{\sqrt{10}}{5},\,\tfrac{1}{2}\pm\tfrac{\sqrt{10+10\sqrt{6}}}{5},\,\tfrac{1}{2}\pm i\tfrac{\sqrt{10+10\sqrt{6}}}{5}\bigr\}$, $\theta=\tfrac{6p}{2p+3}$, $\lambda=-\tfrac{4p-1}{2p-5}$;
\item $p=3/2,\ \theta=3/2,\ \lambda=5/2$.
\end{enumerate}
\item $(y-px)\bigl((3a_3y^3-9a_3y^2x)\,dx+(3yx^2-x^3)\,dy\bigr)$, where:
\begin{enumerate}[label={\rm 4.\arabic*.}]
\item $p=1$ and $a_3=-1/3$;
\item $p\in\{0,\,\tfrac{7}{9}\pm\tfrac{\sqrt{2}}{4}\}$ and $a_3=-\tfrac{7p-1}{3(p-7)}$;
\item $p\in\{-1,1/3,3\}$ and $a_3=-\tfrac{7p-1}{3(p-7)}$;
\item $p=5,\ a_3=5/3$.
\end{enumerate}
\item $(y-px)\bigl((2a_3y^3-\theta(2x^3-3yx^2))\,dx+(2x^3-3yx^2)\,dy\bigr)$, where:
\begin{enumerate}[label={\rm 5.\arabic*.}]
\item $(p,a_3,\theta)\in\{(0,-5/2,2),\ (1,-13/14,8/7)\}$;
\item $\theta=\tfrac{8p}{p+6}$ and $(p,a_3)\in\bigl\{(-2,1),\,(\tfrac{7}{13}\pm\tfrac{\sqrt{135}}{5}i,-\tfrac{224}{73}\pm\tfrac{\sqrt{32245}}{5}i),\,(\infty,-13/2)\bigr\}$;
\item $\theta=\tfrac{8p}{p+6}$ and $(p,a_3)\in\{(2,-1/2),\,(2/3,1/10)\}$;
\item $p=-2,\ \theta=-4,\ a_3=1$.
\end{enumerate}
\item[\rm(5')] $(2y-3x)\bigl(5(2y^3-3y^2x)\,dx+(-2x^3-4(2y^3-3y^2x))\,dy\bigr)$.
\item $(y-px)\bigl((a_3y^3+a_2y^2x-\theta x^3)\,dx+(-\epsilon(a_3y^3+a_2y^2x)+x^3)\,dy\bigr)$, where:
\begin{enumerate}[label={\rm 6.\arabic*.}]
\item $(p,a_3,a_2,\theta,\epsilon)\in\{(1,-2,3,6/5,8/7),\,(\infty,-2,3,3,2)\}$;
\item $(p,a_3,a_2,\theta,\epsilon)\in\bigl\{(3/2,-2,3,3/2,4/5)\bigr\}$, together with
\[
\bigl(\zeta,\ \tfrac{205}{286}-\tfrac{525}{286}\zeta,\ -\tfrac{3}{2}\bigl(\tfrac{205}{286}-\tfrac{525}{286}\zeta\bigr),\ \tfrac{39}{88}+\tfrac{45}{88}\zeta,\ \tfrac{35}{19}-\tfrac{15}{19}\zeta\bigr),\quad \zeta\ \text{a root of}\ 5Z^2-10Z+13,
\]
each of the two roots $\zeta$ giving one pre-foliation (here $p=\zeta$);
\item $(p,a_3,a_2,\theta,\epsilon)\in\{(-1/2,-2,3,-3/4,-4),\,(1/2,-2,3,3/4,2)\}$.
\end{enumerate}
\end{enumerate}
\end{proposition}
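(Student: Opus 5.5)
I would argue type by type through the six entries of the second column of Table~\ref{tab:types}, namely $2R_1+R_2$, $R_1+R_2+T_1$, $R_2+2T_1$, $2R_1+T_2$, $R_1+T_1+T_2$ and $2T_1+T_2$. The plan is to show that these correspond to items (1)--(6) of the statement, in that order; type $2T_1+T_2$ also produces the exceptional form~(5').

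The first step is a normal form for $\Hcal$ in each type. Using a M\"obius change of $[x:y]$, induced by an automorphism $[x:y:z]\mapsto[\varphi(x,y):z]$ that preserves the homogeneous setting, I place the critical points of $\GH$ at $[0:1]$, $[1:0]$ and $[1:1]$. For a fixed critical point I impose $\GH(c)=c$, and in every case I impose the correct ramification index $\nu\in\{2,3\}$, i.e.\ vanishing order of $A_xB_y-A_yB_x$ along the corresponding line through $O$. This produces the displayed families. For example, in type $2R_1+R_2$ one gets $y^3dx+x^2(2x-3y)dy$ with no parameter left. In the other types one or two parameters remain ($\lambda$, $\theta$, $a_3$, $\epsilon$, \dots), recording the image under $\GH$ of the non-fixed critical points. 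I would also check $\gcd(A,B)=1$ and that the type is exactly the one intended, so that no degeneration to another column of Table~\ref{tab:types} occurs.

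The second step is to go through the placements of $\ell$ listed in Section~\ref{sec:strategy}, using Theorem~\ref{thm:Bed-flatness} to reduce flatness to holomorphy of $K$ along $\GH(I^{\Tr}_\Hcal)$, together with $D_\ell$ when $\ell$ is non-invariant.
\begin{enumerate}[leftmargin=*]
\item The case $\ell=\ell_\infty$ is already settled by Proposition~\ref{prop:Linf}. The forms arising there with $\deg\Tcal_\Hcal=3$ only need to be matched, and the ones corresponding to $p=\infty$ are the values $p=\infty$ appearing in the lists.
\item For $\ell=\{y-px=0\}$ I write, for each transverse inflection line, the condition of Corollary~\ref{cor:Bed-T}. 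When $\ell$ itself is a transverse inflection line, I use Corollary~\ref{cor:Bed-trans} for $\ell$ instead.
\item If $\ell$ is non-invariant and generic, I add the condition $Q(1,p;-p,1)=0$ from Corollary~\ref{cor:Bed-noninv}.
\item If $\GH([p:1])$ coincides with the image of a non-fixed critical point, Corollary~\ref{cor:Bed-T} for that component is no longer available. In that case I use Theorem~\ref{thm:Bed-dl}'s condition instead, namely Theorem~\ref{thm:Bed-Dl} for $\ell$ together with Theorem~\ref{thm:Bed-comp} for the shared component. The extra constraint $\GH([p:1])=\GH(c)$ is an additional polynomial equation in $p$ and the parameters.
\item For invariant $\ell$, $[p:1]$ ranges over the finitely many roots of $xA+yB$. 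Only the inflection conditions remain, evaluated at those $p$.
\end{enumerate}

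Each of these cases yields a polynomial system in $p$ and the parameters of the normal form. Clearing the denominators $B(b_i,a_i)P_i(b_i,a_i)$ and $\alpha b_i+\beta a_i$ is legitimate because of the genericity hypotheses of the corollaries. I would solve each system exactly, using Gröbner bases or resultants in \textsc{Maple}. In the one-parameter families the transverse-line condition is typically linear in the parameter, which gives the rational expressions $\lambda=\tfrac{1-4p}{2p-5}$, $\theta=\tfrac{6p}{2p+3}$, $\tfrac{8p}{p+6}$ and so on. Substituting these into the $D_\ell$-condition leaves a univariate polynomial in $p$ whose roots give the finite lists. The higher-degree roots such as $\tfrac12\pm\sqrt{10+10\sqrt6}/5$ and the roots of $5Z^2-10Z+13$ should come out of this elimination.

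The last step is to discard spurious solutions and identify repetitions. A solution is spurious if $\gcd(A,B)\neq1$, if the type degenerates, or if the point $[p:1]$ falls into a fiber that contradicts the hypotheses of the corollary used. For repetitions, I quotient by the residual M\"obius maps preserving the normal form, such as the transposition of the two $R_1$ points in case~(1). I expect the main obstacle to be the exhaustiveness of the coincidence subcases in the fourth bullet. The fibers of a degree-$3$ map depend on the parameters, so one must make sure every way in which $\ell$ can share a fiber with an inflection line, or with two of them, is covered, and that the extraneous components introduced by clearing denominators are removed. I would handle this by computing the fiber condition as a resultant and intersecting it with the flatness ideal before saturating by the degeneracy locus.
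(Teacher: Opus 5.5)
Your overall architecture (Möbius normal form per type, case split on the position of $\ell$, Bedrouni's criteria, exact elimination, then removal of degenerate solutions and residual symmetries) is the paper's. Two steps, however, fail as written.

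First, $p=\infty$ is not $\ell_\infty$. By the convention fixed in Section~\ref{sec:criteria}, $p=\infty$ denotes the line $\{x=0\}$ through the origin, i.e.\ the direction of the critical point normalized at $\infty$. Every member of the list has the form $(y-px)\,\omega_{\Hcal}$. The case $\ell=\ell_\infty$ is Proposition~\ref{prop:Linf}, which Theorem~\ref{thm:main} lists separately. So matching the $\ell_\infty$ forms against the $p=\infty$ entries cannot work. For example, the $p=\infty$ member of (2.2) is $x\bigl(-2(2y^3-3y^2x)\,dx+x^3\,dy\bigr)$. Its foliation has $\lambda=-2$, whereas $\omega_5$, the only form of type $R_1+R_2+T_1$ in Proposition~\ref{prop:Linf}, normalizes to $\lambda=-1/2$. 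The line $\{x=0\}$ must instead be run through the through-origin analysis like any other candidate. It is the invariant line through the radial point placed at $\infty$ in (2.2), (3.2) and (5.2). In (6.1) it is a transverse inflection line, handled by Corollary~\ref{cor:Bed-trans}.

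Second, you never deal with the finiteness hypothesis built into Theorems~\ref{thm:Bed-comp} and~\ref{thm:Bed-Dl}. They are stated in the affine chart $(p,q)$ of $\Pdual$ for components $\{p=p_0\}$ with $p_0$ finite, and Theorem~\ref{thm:Bed-Dl} explicitly assumes $\GH(\ell)\neq\infty$. Your claim that clearing $B(b_i,a_i)$ is harmless by genericity skips exactly the case $\GH(\ell)=\infty$.

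Consider the normal forms of types $2R_1+R_2$, $2R_1+T_2$ and $R_1+T_1+T_2$. There $B=x^2L$ with $L$ linear, and the zero $p_*$ of $L(1,p)$ (namely $2/3$, $1/3$, $2/3$) is the second direction in the fiber over the radial point at $\infty$. At $p=p_*$ the polynomial $P$ of Corollary~\ref{cor:Bed-noninv} is a constant multiple of $x^2$ and $B(1,p_*)=0$. Hence $Q(1,p_*;-p_*,1)=0$ holds for all parameter values and decides nothing.

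The paper handles this by re-normalizing the Möbius coordinate, putting the radial point at $0$ and the $T_2$ point at $\infty$, and applying Corollary~\ref{cor:Bed-T} in the new chart. That is exactly where (5') comes from. So (5') belongs to type $R_1+T_1+T_2$, not $2T_1+T_2$. Writing $h=s^2(2s-3)$, its slope map is $s\mapsto 5h/(2+4h)$. This map has a fixed simple critical point at $s=0$, a $T_1$ point at $s=1$ and a $T_2$ point at $s=\infty$. The line $\ell=\{2y-3x=0\}$, i.e.\ $s=3/2$, lies in the fiber of the radial direction.

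Finally, in your fourth bullet the component shared by $\ell$ and the inflection line is $D_\ell$ itself, which Theorem~\ref{thm:Bed-comp} excludes. Theorem~\ref{thm:Bed-Dl} alone governs that component; the critical point in the fiber enters through its sum over the $[a_i:b_i]$.
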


\begin{proof}
We treat the six admissible types in turn.

\medskip
\noindent\emph{(1) $\Tcal_{\Hcal}=2\cdot R_1+1\cdot R_2$.} The map $\GH$ has three fixed critical points which, after a M\"obius transformation, may be placed at $\infty=[1:0]$, $[1:1]$, and $[0:1]$, of orders $1$, $1$, and $2$ respectively. Hence $\Hcal$ is given by $\omega_{\Hcal}=y^3\,dx+x^2(2x-3y)\,dy$, with inflection divisor
\[
I_{\Hcal} = \{z=0\}\cup C_{\Hcal},\qquad C_{\Hcal}:\,xy(y-x)(y-2x)=0.
\]
By~\citep[Corollary~D]{Bed23}, if $\ell$ is invariant by $\Hcal$, then $\Leg(\ell\boxprod\Hcal)$ is flat. The lines in $C_{\Hcal}$ correspond to the points $[0:1]$, $\infty$, $[1:1]$, $[1/2:1]$. Since $\infty$ and $[1:1]$ are simple fixed critical points, the M\"obius transformation that interchanges them conjugates the pre-foliations $(y-x)\bigl(y^3\,dx+x^2(2x-3y)\,dy\bigr)$ and $x\bigl(y^3\,dx+x^2(2x-3y)\,dy\bigr)$. Explicitly, this transformation is the projective automorphism $\sigma:[x:y:z]\mapsto[x:x-y:z]$, which fixes $\ell_\infty$ and the origin, sends the direction $[1:0]$ to $[1:1]$ and vice versa, and fixes $[0:1]$; a direct substitution shows $\sigma^{*}\bigl(x\,(y^3\,dx+x^2(2x-3y)\,dy)\bigr)$ is proportional to $(y-x)\bigl(y^3\,dx+x^2(2x-3y)\,dy\bigr)$. Thus, up to conjugation,
\[
\omega\in\bigl\{(y-2x),\ (y-x),\ y\bigr\}\cdot\bigl(y^3\,dx+x^2(2x-3y)\,dy\bigr).
\]
For $\ell=\{y-px=0\}$ a non-invariant line, Corollary~\ref{cor:Bed-noninv} yields the equation $(p^2+p-2)(2-3p)=0$, whose admissible roots give $\ell\in\bigl\{\{y-2x/3=0\},\ \{y+2x=0\}\bigr\}$. This produces the cases listed in (1).

\medskip
\noindent\emph{(2) $\Tcal_{\Hcal}=1\cdot R_2+1\cdot R_1+1\cdot T_1$.} The Gauss map $\GH$ has two fixed critical points (at $\infty$ and $[0:1]$, of orders $2$ and $1$) and one non-fixed critical point at $[1:1]$ of order $1$. Setting $\GH([1:1])=[\lambda:1]$, we may write
\[
\omega_{\Hcal}=\lambda(2y^3-3y^2x)\,dx+x^3\,dy,
\]
and
\[
C_{\Hcal}:\ xy(3\lambda xy-2\lambda y^2-x^2)=0,\qquad D_{\Hcal}:\ 18\lambda yx^2(y-x)=0.
\]

\noindent\emph{(2.1)} For $\ell=\{y-x=0\}$ a transverse inflection line, Corollary~\ref{cor:Bed-trans} gives $\lambda=-7/5$, leading to the form
$\omega=(y-x)\bigl((-14y^3+21y^2x)\,dx+5x^3\,dy\bigr)$.

\noindent\emph{(2.2)} For $\ell=\{y-px=0\}$ not in $I_{\Hcal}^{\Tr}$ and such that $[p:1]\notin\GH^{-1}(\GH([1:1]))$, applying Corollary~\ref{cor:Bed-T} to $T=\{y-x=0\}$ gives $\lambda=\tfrac{1-4p}{2p-5}$. The invariant lines in $C_{\Hcal}$ correspond to $p\in\{0,-1/2,5/4,2/3\}$, and the cases $\ell$ non-invariant with $\GH$-fiber not coinciding with $\GH([1:1])$ produce $p=3/2$ and $p=-1/8\pm\tfrac{\sqrt{39}}{8}i$.

\noindent\emph{(2.3)} If $\GH([p:1])=[\lambda:1]$, Theorem~\ref{thm:Bed-Dl} forces $(p,\lambda)=(-1/2,-1/2)$, giving the form
$\omega=(2y+x)\bigl((-2y^3+3y^2x)\,dx+2x^3\,dy\bigr)$. The case $\GH([p:1])=[0:1]$ analogously yields $(3/2,5/2)$, with form
$\omega=(2y-3x)\bigl((10y^3-15y^2x)\,dx+2x^3\,dy\bigr)$.

\medskip
\noindent\emph{(3) $\Tcal_{\Hcal}=1\cdot R_2+2\cdot T_1$.} The Gauss map has one fixed critical point at $\infty$ of order~$2$ and two non-fixed critical points at $[0:1]$ and $[1:1]$, both of order~$1$. Setting $\GH([1:1])=[\lambda:1]$ and $\GH([0:1])=[\theta:1]$,
\[
\omega_{\Hcal}=\bigl((\lambda-\theta)(2y^3-3y^2x)-\theta x^3\bigr)\,dx+x^3\,dy,
\]
and
\[
C_{\Hcal}:\ x\bigl(3\lambda xy^2-2\lambda y^3+\theta x^3-3\theta xy^2+2\theta y^3-x^2y\bigr)=0,\quad D_{\Hcal}:\ 18(\lambda-\theta)yx^2(y-x)=0.
\]

\noindent\emph{(3.1)} For $\ell=\{y-x=0\}$ a transverse inflection line, Corollary~\ref{cor:Bed-trans} produces the form
$\omega=(y-x)\bigl((-13(2y^3-3y^2x)-6x^3)\,dx+5x^3\,dy\bigr)$.

\noindent\emph{(3.2)} For $\ell=\{y-px=0\}$ not in $I_{\Hcal}^{\Tr}$ and such that the fibers $\GH^{-1}(\GH([1:1]))$ and $\GH^{-1}(\GH([0:1]))$ do not contain $[p:1]$, applying Corollary~\ref{cor:Bed-T} to $M=\{y=0\}$ and $T=\{y-x=0\}$ yields the relations $\theta=\tfrac{6p}{2p+3}$ and $\lambda=-\tfrac{4p-1}{2p-5}$. Inspection of $C_{\Hcal}$ gives the listed invariant lines.

\noindent\emph{(3.3)} For $\ell$ non-invariant, Corollary~\ref{cor:Bed-noninv} yields the equations whose solutions, given in (3.3), are the values of $p$ listed.

\noindent\emph{(3.4)} If $\GH([p:1])=[\theta:1]$, then $p=3/2$ and Theorem~\ref{thm:Bed-Dl} together with Corollary~\ref{cor:Bed-T} applied to $T$ give $\theta=3/2$ and $\lambda=5/2$, producing the form $(2y-3x)\bigl((4y^3-6y^2x-3x^3)\,dx+2x^3\,dy\bigr)$.

\medskip
\noindent\emph{(4) $\Tcal_{\Hcal}=2\cdot R_1+1\cdot T_2$.} The Gauss map has two fixed critical points at $\infty$ and $[0:1]$, both of order~$1$, and one non-fixed critical point at $[1:1]$ of order~$2$. Setting $\GH([1:1])=[\lambda:1]$,
\[
\omega_{\Hcal}=(3a_3y^3-9a_3y^2x)\,dx+(3yx^2-x^3)\,dy,\qquad \lambda=3a_3,
\]
and
\[
C_{\Hcal}:\ xy(9a_3xy-3a_3y^2+x^2-3xy)=0,\qquad D_{\Hcal}:\ 54a_3xy(x-y)^2=0.
\]
The analysis follows the same pattern, distinguishing the cases $\ell=\{y-x=0\}$ (a transverse inflection line, treated by Corollary~\ref{cor:Bed-trans}), $\ell=\{y-px=0\}$ with $[p:1]\notin\GH^{-1}(\GH([1:1]))$ (treated by Corollary~\ref{cor:Bed-T}, giving $a_3=-\tfrac{7p-1}{3(p-7)}$), $\ell$ non-invariant (Corollary~\ref{cor:Bed-noninv}), and $\GH([p:1])=[0:1]$ (Corollary~\ref{cor:Bed-T} applied to $\{y=0\}$). The cases (4.1)--(4.4) follow.

\medskip
\noindent\emph{(5) $\Tcal_{\Hcal}=1\cdot R_1+1\cdot T_1+1\cdot T_2$.} Setting $\GH([1:1])=[\lambda:1]$ and $\GH([0:1])=[\theta:1]$,
\[
\omega_{\Hcal}=(2a_3y^3-\theta(2x^3-3yx^2))\,dx+(2x^3-3yx^2)\,dy,\qquad \lambda=\theta+2a_3,
\]
with
\[
C_{\Hcal}:\ x(2a_3y^3-2\theta x^3+3\theta x^2y+2x^2y-3xy^2)=0,\quad D_{\Hcal}:\ 36a_3y^2x(y-x)=0.
\]
The four sub-cases are treated by the same criteria as case~(2), with the following explicit equations. In~(5.1), $\ell=\{y-x=0\}$ and $\ell=\{y=0\}$ are the transverse inflection lines and Corollary~\ref{cor:Bed-trans} is applied to each. In~(5.2), for $\ell=\{y-px=0\}$ not in $I_{\Hcal}^{\Tr}$ with $[p:1]$ outside the fibers of $[1:1]$ and $[0:1]$, Corollary~\ref{cor:Bed-T} applied to $M=\{y=0\}$ and $T=\{y-x=0\}$ gives $\theta=\tfrac{8p}{p+6}$; the invariant solutions ($\ell\subseteq C_{\Hcal}$) and the non-real solution are those listed. In~(5.3), $\ell$ non-invariant, Corollary~\ref{cor:Bed-noninv} is applied to~$\ell$. In~(5.4), $\GH([p:1])=[\lambda:1]$ forces $p=-2$, and Theorem~\ref{thm:Bed-Dl} together with Corollary~\ref{cor:Bed-T} applied to $M=\{y=0\}$ gives $\theta=-4$. The additional case (5'), where $\GH([1:p])=[1:0]$ falls outside the hypotheses of Theorem~\ref{thm:Bed-Dl}, requires a change of charts: in suitable coordinates the foliation $\widetilde{\Hcal}$ is given by
\[
\omega=(2y^3-3y^2x)\,dx+(2b_0x^3-\epsilon'(2y^3-3y^2x))\,dy,\qquad \epsilon'=\tfrac{6p+1}{8},
\]
and applying Corollary~\ref{cor:Bed-T} to $N=\{x=0\}$ produces the unique pre-foliation listed in (5').

\medskip
\noindent\emph{(6) $\Tcal_{\Hcal}=2\cdot T_1+1\cdot T_2$.} Setting $\GH([1:0])=[\epsilon:1]$, $\GH([1:1])=[\lambda:1]$, $\GH([0:1])=[\theta:1]$,
\[
\omega_{\Hcal}=(a_3y^3+a_2y^2x-\theta x^3)\,dx+\bigl(-\epsilon(a_3y^3+a_2y^2x)+x^3\bigr)\,dy,
\]
with $a_2=-\tfrac{3a_3}{2}$ and $\theta=\lambda-\tfrac{(1-\epsilon\lambda)a_3}{2}$. The inflection divisor splits as $I_{\Hcal}=\{z=0\}\cup C_{\Hcal}\cup D_{\Hcal}$ with
\[
C_{\Hcal}:\ -a_2\epsilon\,xy^3-a_3\epsilon\,y^4+a_2x^2y^2+a_3xy^3-\theta x^4+x^3y=0,
\]
\[
D_{\Hcal}:\ 3(\epsilon\theta-1)\,yx^2(2a_2x+3a_3y)=0.
\]
The analysis along the lines of case~(3) yields the sub-cases (6.1)--(6.3).
\end{proof}

\section{Classification for \texorpdfstring{$\deg(\Tcal_\Hcal)=4$}{deg T = 4}}\label{sec:degT4}

When $\deg(\Tcal_{\Hcal})=4$, the four admissible types are $3\cdot R_1+1\cdot T_1$, $2\cdot R_1+2\cdot T_1$, $1\cdot R_1+3\cdot T_1$, and $4\cdot T_1$. We address each in a separate proposition.

\subsection{Type \texorpdfstring{$3\cdot R_1+1\cdot T_1$}{3R\_1+1T\_1}}

\begin{proposition}\label{prop:degT4-3R1T1}
Let $\ell\boxprod\Hcal$ be a homogeneous pre-foliation with $\Tcal_{\Hcal}=3\cdot R_1+1\cdot T_1$. Then, up to projective automorphism, the pre-foliation is defined by
\[
(y-px)\bigl((a_3y^3+a_2y^2x)\,dx+(yx^2+b_0x^3)\,dy\bigr),\qquad a_2=-\tfrac{1}{2}-\tfrac{3a_3}{2},\ \ b_0=\tfrac{a_3}{2}-\tfrac{1}{2},
\]
where $(a_3,p)$ is one of the following:
\begin{enumerate}[leftmargin=*,label={\rm(\arabic*)}]
\item $p=\pm\tfrac{i}{4}\bigl(-2\mp 2i\sqrt{33}\bigr)\sqrt{3}$, $a_3=\pm\tfrac{i}{3}\sqrt{3}$;
\item $p=0$, $a_3=-\tfrac{1}{4}\pm i\tfrac{\sqrt{15}}{12}$;
\item $p=1$, $a_3$ a root of $6Z^2-3Z+1$;
\item $p=0$, $a_3$ a root of $6Z^2+3Z+1$;
\item $p=\tfrac{2}{3}\zeta+\tfrac{1}{2}$, $a_3=\zeta$, with $\zeta$ a root of $3Z^2+1$;
\item $p=-\tfrac{\zeta}{2}+\tfrac{1}{2}$, $a_3=\zeta$, with $\zeta$ a root of $15Z^2+1$;
\item $p=-\tfrac{\zeta}{4}-\tfrac{5}{4}$, $a_3=\zeta$, with $\zeta$ a root of $3Z^2+3Z+2$;
\item $p=-\tfrac{\zeta}{4}+\tfrac{9}{4}$, $a_3=\zeta$, with $\zeta$ a root of $3Z^2-3Z+2$;
\item $p=\dfrac{3(\zeta-1)}{2\zeta+1}$, $a_3=\dfrac{6(3\zeta-1)}{2\zeta+1}$, with $\zeta$ a root of $3Z^2-3Z+7$;
\item $p=\dfrac{\tfrac{264538023825}{131072}\,\zeta-\tfrac{155123361625}{32768}}{\tfrac{34624124995}{32768}\,\zeta-\tfrac{20659263515}{8192}}$, \ $a_3=\dfrac{\tfrac{7846516855}{16384}\,\zeta-\tfrac{15405005005}{12288}}{\tfrac{34624124995}{32768}\,\zeta-\tfrac{20659263515}{8192}}$, \ with $\zeta$ a root of $12Z^2-57Z+68$.
\end{enumerate}
\end{proposition}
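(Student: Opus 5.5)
First normalize $\Hcal$. For type $3\cdot R_1+1\cdot T_1$, the Gauss map $\GH$ has three simple fixed critical points and one simple non-fixed critical point. A Möbius change of the $\Pone$-parameter is induced by an automorphism $[x:y:z]\mapsto[\varphi(x,y):z]$. With it I place two fixed critical points at $\infty=[1:0]$ and $[0:1]$, and the non-fixed one at $[1:1]$. Writing $A=a_3y^3+a_2y^2x+a_1yx^2+a_0x^3$ and $B=b_3y^3+\dots+b_0x^3$, these conditions mean: $[1:0]$ and $[0:1]$ are fixed points of $\GH$ with ramification $2$, and $[1:1]$ is a critical point that is not fixed. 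Together with the scaling freedom they force $a_0=a_1=b_3=b_2=0$, $b_1=1$, and the two linear relations $a_2=-\tfrac12-\tfrac32a_3$, $b_0=\tfrac{a_3}{2}-\tfrac12$. The third fixed critical point is then determined by $a_3$; this is the same kind of computation as in case (2) of Proposition~\ref{prop:degT3}. I also record $C_{\Hcal}=\{xA+yB=0\}$, $D_{\Hcal}=\{A_xB_y-A_yB_x=0\}$, and $\GH([1:1])$. Then I record the residual automorphisms: the Möbius maps permuting the three fixed critical points while fixing $[1:1]$. These act on $a_3$, and I use them at the end to discard duplicates.

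Next, by Theorem~\ref{thm:Bed-flatness}, flatness of $\Leg(\ell\boxprod\Hcal)$ is equivalent to holomorphy of the curvature along $T'=\GH(T)$, with $T=\{y-x=0\}$ the unique transverse inflection line, and also along $D_\ell$ when $\ell$ is non-invariant. The case $\ell=\ell_\infty$ is already covered by Proposition~\ref{prop:Linf}. For $\ell=\{y-px=0\}$ I follow the strategy of Section~\ref{sec:strategy} and split into four subcases.
(a) $\ell=T$: apply Corollary~\ref{cor:Bed-trans}. This gives one polynomial equation in $a_3$.
(b) $\ell$ not equal to $T$ and $[p:1]\notin\GH^{-1}(\GH([1:1]))$: apply Corollary~\ref{cor:Bed-T} to $T$. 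This gives one equation $E_1(p,a_3)=0$. Then:
 (b1) if $\ell$ is invariant, intersect with $p$ being a root of $xA+yB$ at $[1:p]$;
 (b2) if $\ell$ is non-invariant, add the equation $Q(1,p;-p,1)=0$ from Corollary~\ref{cor:Bed-noninv}.
(c) $\GH([p:1])=\GH([1:1])$ with $p\neq1$: apply Theorem~\ref{thm:Bed-Dl} to $D_\ell=T'$ jointly. This gives one equation, plus the fiber condition.
In every subcase I also treat separately the degenerate positions $p=\infty$ and $\GH([p:1])=\infty$ by a change of chart, as was done for (5') in Proposition~\ref{prop:degT3}.

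Each subcase then gives a zero-dimensional polynomial system in $(p,a_3)$. I solve it exactly, by resultants or Gröbner bases, and discard spurious solutions. These are the solutions with $\gcd(A,B)\neq1$, those where the type degenerates (the third fixed critical point collides with another, or with $[1:1]$, or $[1:1]$ becomes fixed), and those violating the side hypotheses of the corollaries. The surviving solutions give items (1)–(10). The invariant-line subcase (b1) should produce the items with $p=0$ or $p=1$ and the quadratic conditions on $a_3$. The non-invariant subcase (b2) should produce the items given as rational functions of a quadratic root. Finally I quotient by the residual permutation of the fixed critical points and check that the remaining solutions are pairwise inequivalent.

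I expect the main obstacle to be subcases (b2) and (c). There the equations from Corollary~\ref{cor:Bed-noninv} and Theorem~\ref{thm:Bed-Dl} have high degree in $p$, and the resultant with $E_1$ carries large extraneous factors (like the coefficients in item (10)). My first attempt would be to eliminate $p$ using $E_1$, which I expect to be linear in $a_3$ or low degree, factor the resulting univariate polynomial over $\Q$, and test each factor against the nondegeneracy conditions. For the reduction modulo automorphisms, I would compute the orbit of each solution under the finite residual group directly.
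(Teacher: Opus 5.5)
\textbf{There is a genuine error in your normalization, and the rest of the plan is built on it.} Your overall plan is the paper's: fix a normal form from the ramification data of $\GH$, reduce flatness via Theorem~\ref{thm:Bed-flatness}, split according to the position of $\ell$, and solve the resulting systems exactly while discarding degenerate solutions.

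The failure is in the first step. You place the \emph{non-fixed} critical point at $[1:1]$ and then assert the relations $a_2=-\tfrac12-\tfrac32a_3$, $b_0=\tfrac{a_3}{2}-\tfrac12$. But these relations say exactly that $[1:1]$ is a \emph{fixed} critical point:
\begin{itemize}
\item $A(1,1)+B(1,1)=a_3+a_2+1+b_0=0$, so $\{y=x\}$ is invariant;
\item $(1,1)$ is a zero of the factor $2a_2b_0x^2+(3a_3b_0+a_2)xy+2a_3y^2$ of $D_{\Hcal}$.
\end{itemize}
So in the normal form of the statement the three radial directions are $\infty,[0:1],[1:1]$. The unique transverse inflection line is $T=\{y-ax=0\}$ with $a=-\frac{3a_3^2-2a_3-1}{4a_3}$. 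If you carry out your own normalization consistently, you get a different one-parameter family: $b_0=-\frac{2a_3+a_2}{3a_3+2a_2}$ together with $a_2^2+2a_2+3a_3=0$. That family does not yield the displayed list (1)--(10) without a further change of coordinates sending the third fixed critical point to $[1:1]$.

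\textbf{Consequences.} As written, subcase (a) applies Corollary~\ref{cor:Bed-trans} to a line that is not a transverse inflection line. Subcase (b) applies Corollary~\ref{cor:Bed-T} at the radial direction $[1:1]$ instead of along $\GH(T)$. The equations you would obtain are therefore not the flatness conditions. This also appears as an internal inconsistency: you expect the $p=1$ item from the invariant subcase (b1), after declaring $\{y=x\}=T$.

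\textbf{Repair (the paper's route).} Work with $T=\{y-ax=0\}$.
\begin{itemize}
\item Corollary~\ref{cor:Bed-trans} with $\ell=T$ forces $3a_3^2+1=0$; this is item (1), with $p=a$.
\item Corollary~\ref{cor:Bed-T} along $\GH(T)$ gives $p=-\frac{18a_3^4-3a_3^3-9a_3^2-5a_3-1}{2a_3(3a_3^2+5)}$. Intersecting with the invariant directions $p\in\{0,1,\frac{a_3-1}{2a_3}\}$ gives items (2)--(5).
\item Combining that same relation with Corollary~\ref{cor:Bed-noninv} gives items (6)--(8).
\item The fibers $\GH([p:1])=\GH([a:1])$ and $\GH([p:1])=[0:1]$ are handled by Theorem~\ref{thm:Bed-Dl}, giving items (9) and (10).
\end{itemize}
The fiber $\GH([p:1])=[0:1]$ passes through a radial direction. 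It is a genuine subcase requiring Theorem~\ref{thm:Bed-Dl}, not merely a change of chart; your case $\GH([p:1])=\infty$ is the same situation up to symmetry, and it is where item (10) comes from. Finally, the residual symmetries to quotient by are the six M\"obius maps permuting $\{0,1,\infty\}$, which act on $a_3$, rather than maps fixing $[1:1]$.
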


\begin{proof}
The Gauss map has three fixed critical points placed at $\infty,\,[0:1],\,[1:1]$ of order~$1$ each, and one non-fixed critical point $[a:1]$. Setting $\GH([a:1])=[\gamma:1]$ with $\gamma\neq a$, the foliation is
\[
\omega_{\Hcal}=(a_3y^3+a_2y^2x)\,dx+(yx^2+b_0x^3)\,dy,
\]
with $a_2=-\tfrac{1}{2}-\tfrac{3a_3}{2}$, $b_0=\tfrac{a_3}{2}-\tfrac{1}{2}$, and $a=-\tfrac{3a_3^2-2a_3-1}{4a_3}$. Then
\[
C_{\Hcal}:\ xy(a_2xy+a_3y^2+b_0x^2+xy)=0,\qquad D_{\Hcal}:\ -3xy(2a_2b_0x^2+3a_3b_0xy+a_2xy+2a_3y^2)=0.
\]
For $\ell=\{y-ax=0\}$ a transverse inflection line, Corollary~\ref{cor:Bed-trans} forces $a_3=\pm\tfrac{i\sqrt{3}}{3}$, giving case~(1). For $\ell=\{y-px=0\}$ with $[p:1]\notin\GH^{-1}(\GH([a:1]))$, Corollary~\ref{cor:Bed-T} applied to $T=\{y-ax=0\}$ yields
\[
p=-\frac{18a_3^4-3a_3^3-9a_3^2-5a_3-1}{2a_3(3a_3^2+5)}.
\]
The invariant locus $\ell\subseteq C_{\Hcal}$ produces cases (2), (3), (4), (5); the non-invariant case via Corollary~\ref{cor:Bed-noninv} produces cases (6), (7), (8); and the cases $\GH([p:1])=[\gamma:1]$ and $\GH([p:1])=[0:1]$, treated via Theorem~\ref{thm:Bed-Dl} applied to $M=\{y-ax=0\}$ and $R=\{y=0\}$ respectively, produce cases (9) and (10).
\end{proof}

\subsection{Type \texorpdfstring{$2\cdot R_1+2\cdot T_1$}{2R\_1+2T\_1}}

\begin{proposition}\label{prop:degT4-2R12T1}
Let $\ell\boxprod\Hcal$ be a homogeneous pre-foliation with $\Tcal_{\Hcal}=2\cdot R_1+2\cdot T_1$. Then, up to projective automorphism, the pre-foliation is defined by
\[
(y-px)\bigl((a_3y^3+a_2y^2x)\,dx+(yx^2+b_0x^3)\,dy\bigr),\qquad b_0=-\tfrac{2a_3+a_2}{3a_3+2a_2},
\]
where $(p,a_3,a_2)$ is one of the following:
\begin{enumerate}[leftmargin=*,label={\rm(\arabic*)}]
\item $p=1$, $a_2=\zeta$ and $a_3=-\tfrac{4}{11}\zeta^2-\tfrac{1}{5}\zeta+\tfrac{7}{11}$, with $\zeta$ a root of $28Z^4-7Z^3-57Z^2-7Z+28$;
\item $p=0$, $a_2=1/4$ and $a_3=\zeta/2$, with $\zeta$ a root of $24Z^2+3Z-1$;
\item $p=7/4-2\zeta$, $a_2=-1$ and $a_3=\zeta$, with $\zeta$ a root of $8Z^2-7Z+2$;
\item $a_2=\zeta$ with $\zeta$ a root of $4Z^2+7Z+4$, and $a_3=\tfrac{1}{2}\,\xi$, $\ p=-\tfrac{1}{4}\zeta\xi-\tfrac{23}{16}\xi-\tfrac{3}{2}\zeta+\tfrac{3}{8}$, where $\xi$ is a root of $9\zeta Z^2+6Z-7\zeta-4$ (so each admissible pair $(\zeta,\xi)$ gives one pre-foliation);
\item $p=-30\zeta^3-\tfrac{57}{4}\zeta^2-\tfrac{5}{2}\zeta-\tfrac{9}{4}$, $a_2=1$ and $a_3=\zeta$, with $\zeta$ a root of $72Z^4+63Z^3+12Z^2+3Z+2$;
\item $p=-\zeta+1$, $a_2=-1/2$ and $a_3=\zeta/2$, with $\zeta$ a root of $3Z^2-3Z+1$;
\item $p=-1-\tfrac{2}{3}\zeta$, $a_2=\zeta$ and $a_3=\tfrac{1}{9}-\tfrac{2}{9}\zeta$, with $\zeta$ a root of $6Z^2+11Z+2$;
\item $p=3-3\zeta$, $a_2=-2$ and $a_3=2\zeta$, with $\zeta$ a root of $3Z^2-3Z+1$.
\end{enumerate}
\end{proposition}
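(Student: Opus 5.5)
I will follow the template used in the proof of Proposition~\ref{prop:degT4-3R1T1}. First I fix a normal form. The type $2\cdot R_1+2\cdot T_1$ means that $\GH$ has two simple fixed critical points and two simple non-fixed critical points. By a Möbius change of the $\Pone$-parameter, induced by an automorphism $[x:y:z]\mapsto[\varphi(x,y):z]$, I place the fixed critical points at $\infty=[1:0]$ and $[0:1]$.

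Writing $A=a_3y^3+a_2y^2x+a_1yx^2+a_0x^3$ and $B=b_3y^3+b_2y^2x+b_1yx^2+b_0x^3$, I then impose the conditions in turn:
\begin{itemize}
\item[(i)] $[1:0]$ and $[0:1]$ are fixed, i.e.\ lie on $C_{\Hcal}=\{xA+yB=0\}$;
\item[(ii)] both points are critical, i.e.\ lie on $D_{\Hcal}=\{A_xB_y-A_yB_x=0\}$;
\item[(iii)] scaling normalizes one coefficient, $b_1=1$.
\end{itemize}
This should force $A=a_3y^3+a_2y^2x$ and $B=yx^2+b_0x^3$. The remaining non-fixed critical points are the two roots of the quadratic factor of $D_{\Hcal}$, namely
\[
2a_2b_0x^2+(3a_3b_0+a_2)xy+2a_3y^2 .
\]
The type hypothesis forces these two roots to be non-fixed, so the quadratic factor must be coprime to $C_{\Hcal}$, and at least one residual normalization remains. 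I expect that using the leftover scaling freedom $(x,y)\mapsto(x,cy)$, which preserves $0$ and $\infty$, to impose one condition on the critical points yields the relation $b_0=-\tfrac{2a_3+a_2}{3a_3+2a_2}$. This leaves a two-parameter family in $(a_2,a_3)$, plus the line parameter $p$.

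Next I run through the positions of $\ell=\{y-px=0\}$ listed in Section~\ref{sec:strategy}, with $\ell_\infty$ already covered by Proposition~\ref{prop:Linf}. By Theorem~\ref{thm:Bed-flatness}, flatness is equivalent to holomorphy of the curvature along the images of the two transverse inflection lines $T_1,T_2$, and along $D_\ell$ when $\ell$ is non-invariant.

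\begin{itemize}
\item[(a)] \textbf{$\ell$ is one of the $T_j$.} I apply Corollary~\ref{cor:Bed-trans} to $\ell$ and Corollary~\ref{cor:Bed-T} to the other inflection line. Moving that line to $[1:1]$ gives $p=1$. This produces two polynomial equations in $(a_2,a_3)$; eliminating $a_3$ should give the quartic of case~(1).
\item[(b)] \textbf{$[p:1]$ is outside the fibres of the non-fixed critical points.} I apply Corollary~\ref{cor:Bed-T} to $T_1$ and to $T_2$, each giving one equation in $(p,a_2,a_3)$. I then add the condition that $\ell\subset C_{\Hcal}$ for the invariant cases, or Corollary~\ref{cor:Bed-noninv}, $Q(1,-p;-p,1)=0$, for the non-invariant cases. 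This gives three equations in three unknowns with finitely many solutions, which should yield cases (2)--(6).
\item[(c)] \textbf{$\GH([p:1])$ equals the image of some $T_j$.} Here Theorem~\ref{thm:Bed-Dl} replaces the single-line criteria, and I add the fibre condition $\det\bigl(\begin{smallmatrix}A(1,p)&A(\cdot)\\B(1,p)&B(\cdot)\end{smallmatrix}\bigr)=0$. This should yield cases (7) and (8).
\end{itemize}

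In every branch I must discard degenerate solutions: those with $\gcd(A,B)\neq1$, solutions where the type collapses (critical points merging or becoming fixed), and spurious roots introduced by clearing the denominators $P_i$ and $B(b_i,a_i)$.

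The main obstacle is the elimination. The critical points of $\GH$ are roots of a quadratic, so they are not rational in $(a_2,a_3)$, and the criteria of Corollary~\ref{cor:Bed-T} and Theorem~\ref{thm:Bed-Dl} must be evaluated at them. I would try first to avoid adjoining the roots by symmetrizing. The sum of the Corollary~\ref{cor:Bed-T} conditions over the two conjugate roots, and their product, are rational in the coefficients, so one can work with those. Alternatively, each condition can be reduced modulo the quadratic factor of $D_{\Hcal}$ and then eliminated by a resultant. The resulting systems would be solved with Gröbner bases in Maple.

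Finally, I would quotient by the residual automorphisms: the swap of $0$ and $\infty$, and the swap of $T_1$ and $T_2$ where the normalization allows it. The aim is to list one representative per class, matching each of the stated pairs of conjugate roots $\zeta$.
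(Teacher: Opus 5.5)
Your plan is essentially the paper's proof: the same normal form, the same Bedrouni criteria, and the same split by the position of $\ell$. The one exception is that your ``main obstacle'' is illusory. The relation $b_0=-\tfrac{2a_3+a_2}{3a_3+2a_2}$ is exactly the condition that $[1:1]$ is a root of the quadratic factor of $D_{\Hcal}$. Hence the other non-fixed critical point $a=a_2b_0/a_3=-\tfrac{a_2(2a_3+a_2)}{a_3(3a_3+2a_2)}$ is rational, and the paper simply evaluates the criteria at $[1:1]$ and $[a:1]$, with no symmetrization or resultants.

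There is also a bookkeeping slip: your branch~(c), restricted to fibres of the $T_j$, cannot produce~(8). The paper's last branch also includes the $\GH$-fibre of the radial direction $\{y=0\}$, i.e.\ $\GH([p:1])=[0:1]$, and that is where (8) lives: $p=3-3\zeta=1/\zeta=-a_2/a_3$, so $A(1,p)=0$. In your partition this configuration falls under~(b), which is fine, since Corollary~\ref{cor:Bed-noninv} only excludes non-fixed critical points from the fibre. Moreover, (8) is the image under $(x,y)\mapsto(y,x)$, with $\zeta\mapsto 1-\zeta$, of~(6), which lies over $\infty$ ($p=-b_0$), so your quotient by the residual automorphisms will merge these two entries.
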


\begin{proof}
The Gauss map has two fixed critical points at $\infty$ and $[0:1]$, both of order~$1$, and two non-fixed critical points $[1:1]$ and $[a:1]$. Setting $\GH([1:1])=[\lambda:1]$ and $\GH([a:1])=[\gamma:1]$, the foliation is
\[
\omega_{\Hcal}=(a_3y^3+a_2y^2x)\,dx+(yx^2+b_0x^3)\,dy,
\]
with $b_0=-\tfrac{2a_3+a_2}{3a_3+2a_2}$, $a=-\tfrac{a_2(2a_3+a_2)}{a_3(3a_3+2a_2)}$, $\lambda=-3a_3-2a_2$, and an explicit expression for $\gamma$. The inflection divisor and the case analysis follow the strategy of Section~\ref{sec:strategy}: the transverse case (Corollary~\ref{cor:Bed-trans} together with Corollary~\ref{cor:Bed-T} applied to $T=\{y-ax=0\}$) gives case~(1); the case $[p:1]\notin\GH^{-1}(\GH([1:1]))\cup\GH^{-1}(\GH([a:1]))$ subdivided into invariant and non-invariant lines yields cases (2)--(4) and (5)--(6) respectively; finally, the cases $\GH([p:1])=[\lambda:1]$ and $\GH([p:1])=[0:1]$, treated by Theorem~\ref{thm:Bed-Dl} together with Corollary~\ref{cor:Bed-T} applied to $M=\{y-ax=0\}$, give cases (7) and (8).
\end{proof}

\subsection{Type \texorpdfstring{$1\cdot R_1+3\cdot T_1$}{1R\_1+3T\_1}}

\begin{proposition}\label{prop:degT4-1R13T1}
Let $\ell\boxprod\Hcal$ be a homogeneous pre-foliation with $\Tcal_{\Hcal}=1\cdot R_1+3\cdot T_1$. Then, up to projective automorphism, the pre-foliation is defined by
\[
(y-px)\bigl((a_3y^3+a_2y^2x-\theta(yx^2+b_0x^3))\,dx+(yx^2+b_0x^3)\,dy\bigr),
\]
with $b_0=-\tfrac{2a_3+a_2}{3a_3+2a_2}$ and $\GH([0:1])=[\theta:1]$, where $(p,a_3,a_2,\theta)$ is one of the following:
\begin{enumerate}[leftmargin=*,label={\rm(\arabic*)}]
\item $p=1$ and $(a_3,a_2,\theta)$ given by $a_2=-\tfrac{45}{7}\zeta^3+\tfrac{2885}{112}\zeta^2-\tfrac{1895}{56}\zeta+\tfrac{55}{4}$, $a_3=\tfrac{10}{7}\zeta^3-\tfrac{285}{56}\zeta^2+\tfrac{475}{84}\zeta-\tfrac{15}{14}$, $\theta=\zeta$, with $\zeta$ a root of $48Z^4-267Z^3+556Z^2-504Z+168$;
\item $p=-\tfrac{1}{8}+\tfrac{\zeta}{8}$, $a_2=\tfrac{5}{8}+\tfrac{\zeta}{8}$, $a_3=-\tfrac{\zeta}{24}-\tfrac{3}{8}$, $\theta=\tfrac{\zeta}{4}$, with $\zeta$ a root of $Z^2+15$;
\item $p=\zeta$, $a_2=-1/2$, $a_3=-\tfrac{\zeta}{3}+\tfrac{4}{3}$, $\theta=\zeta$, with $\zeta$ a root of $4Z^2-9Z+6$;
\item $p=-\tfrac{2}{5}+\tfrac{4}{5}\zeta$, $a_2=-\tfrac{1}{2}+\zeta$, $a_3=-\tfrac{2}{3}\zeta+\tfrac{3}{4}$, $\theta=\zeta$, with $\zeta$ a root of $4Z^2-9Z+6$;
\item $p=-\tfrac{8}{5}+\tfrac{16}{5}\zeta$, $a_2=\tfrac{19}{4}-7\zeta$, $a_3=-\tfrac{49}{16}+\tfrac{14}{3}\zeta$, $\theta=\zeta$, with $\zeta$ a root of $112Z^2-147Z+48$;
\item $p=9/4-3\zeta$, $a_2=-1/2$, $a_3=\zeta$, $\theta=9/4-3\zeta$, with $\zeta$ a root of $12Z^2-9Z+2$;
\item $p=\frac{1}{15}+\tfrac{16}{15}\zeta$, $a_2=\zeta$, $a_3=-\zeta/3$, $\theta=1+\zeta$, with $\zeta$ a root of $8Z^2-17Z+20$;
\item $a_2=\zeta/4$ and $a_3=\xi$, with $p$ and $\theta$ given by the explicit rational expressions in $(\zeta,\xi)$ recorded in Appendix~\ref{app:reproducibility}, where $\zeta$ is a root of $4Z^2-5Z-5$ and $\xi$ is the root of the quadratic $192Z^2+\bigl(48\zeta-36\bigr)Z-\zeta+5$ specified there.
\item $a_2=\zeta$ and $a_3=\xi$, with $p$ and $\theta$ given by the explicit rational expressions in $(\zeta,\xi)$ recorded in Appendix~\ref{app:reproducibility}, where $\zeta$ is a root of $2Z^3-3Z^2+10$ and $\xi$ is the root of the quadratic $6Z^2+\bigl(6\zeta-3\bigr)Z+2\zeta^2-2\zeta$ specified there.
\item $a_2=\zeta$ and $a_3=\xi$, with $p$ and $\theta$ given by the explicit rational expressions in $(\zeta,\xi)$ recorded in Appendix~\ref{app:reproducibility}, where $\zeta$ is a root of $20Z^3-12Z^2+1$ and $\xi$ is the root of the quadratic $15Z^2+\bigl(15\zeta-3\bigr)Z+5\zeta^2-2\zeta$ specified there.
\end{enumerate}
\end{proposition}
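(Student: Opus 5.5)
We follow the pattern of the proofs of Propositions~\ref{prop:degT4-3R1T1} and~\ref{prop:degT4-2R12T1}. First we fix a normal form. For type $1\cdot R_1+3\cdot T_1$ the Gauss map $\GH$ has one fixed critical point, which we place at $\infty=[1:0]$, and three non-fixed simple critical points, which a Möbius transformation sends to $[0:1]$, $[1:1]$ and $[a:1]$. Fixing $\infty$ with ramification index $2$ forces $B$ to have the form $x^2(y+b_0x)$ after scaling. The fixed-point condition, together with $B$ vanishing to order $2$ in the $x$-direction, gives exactly this. Writing $\GH([0:1])=[\theta:1]$ and subtracting $\theta B$ from $A$, we obtain $A=a_3y^3+a_2y^2x-\theta(yx^2+b_0x^3)$. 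Requiring $[1:1]$ to be critical gives the relation $b_0=-\tfrac{2a_3+a_2}{3a_3+2a_2}$. The fourth critical point $[a:1]$, and the values $\lambda=\GH([1:1])$ and $\gamma=\GH([a:1])$, are then rational in $(a_3,a_2,\theta)$. We read them off from the Wronskian $A_xB_y-A_yB_x$, which gives $D_{\Hcal}$, and we compute $C_{\Hcal}=\{xA+yB=0\}$. The genericity conditions $\gcd(A,B)=1$, three distinct non-fixed points and non-vanishing images are recorded as open conditions, to be imposed on every solution at the end.

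Next we run through the cases of Section~\ref{sec:strategy} for $\ell=\{y-px=0\}$. Here flatness means, by Theorem~\ref{thm:Bed-flatness}, holomorphy along the three components $\GH(T_i)$ and, if $\ell$ is non-invariant, along $D_\ell$.

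\textbf{(a)} $\ell$ is one of the transverse inflection lines. By the Möbius symmetries permuting $\{[0:1],[1:1],[a:1]\}$ while fixing $\infty$, we may take $\ell=\{y-x=0\}$. We then impose Corollary~\ref{cor:Bed-trans} on $\ell$ and Corollary~\ref{cor:Bed-T} on $\{y=0\}$ and $\{y-ax=0\}$. This gives three polynomial equations in $(a_3,a_2,\theta)$, which we expect to produce case~(1).

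\textbf{(b)} $[p:1]$ lies in no fiber through a critical point. Corollary~\ref{cor:Bed-T} applied to the three transverse lines gives three equations in $(p,a_3,a_2,\theta)$. If $\ell\subset C_{\Hcal}$, we add the invariance condition $pA(1,p)+\dots$, i.e. $xA+yB$ vanishing at $(1,p)$, which should give cases (2)--(5) or similar. Otherwise we add $Q(1,p;p,1)=0$ from Corollary~\ref{cor:Bed-noninv}, giving the remaining generic cases.

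\textbf{(c)} $\GH([p:1])$ equals $\theta$, $\lambda$ or $\gamma$. Up to the residual symmetry, we replace the relevant Corollary~\ref{cor:Bed-T} condition by Theorem~\ref{thm:Bed-Dl} for the shared fiber, and keep Corollary~\ref{cor:Bed-T} for the other two lines. The case where $\GH([p:1])=\infty$ requires a change of chart, as in (5').

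Each subcase gives a zero-dimensional system, which we solve exactly in Maple. We then clear denominators, saturate by the degeneracy loci (vanishing of $3a_3+2a_2$, $\gcd(A,B)\neq1$, coincidences of critical points, $[p:1]$ coinciding with a critical point), and factor to obtain minimal polynomials. The surviving solutions are expressed in terms of one primitive element $\zeta$, or a tower $(\zeta,\xi)$ for cases (8)--(10). Finally we remove duplicates under the residual Möbius maps that permute the three non-fixed critical points; these act on $(a_3,a_2,\theta,p)$ explicitly. We keep one representative per orbit.

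The main obstacle is the elimination in cases (b) and (c). The Corollary~\ref{cor:Bed-T} equations have high degree in four unknowns, and spurious components arise from the degenerate loci. We would first use the two Corollary~\ref{cor:Bed-T} conditions that are linear in $\theta$ (or in $p$) to eliminate those variables. We would then compute a Gröbner basis in lex order for $(p,\theta,a_3,a_2)$ after saturation. As a check, we verify each listed solution by substituting into $K(\Leg\Hwave)$ symbolically, and confirm the projective inequivalence of the representatives.
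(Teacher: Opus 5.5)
Your plan follows the paper's proof. It uses the same normal form: the fixed critical point at $\infty$, the non-fixed ones at $[0:1]$, $[1:1]$, $[a:1]$, and $b_0=-\tfrac{2a_3+a_2}{3a_3+2a_2}$. It uses the same case split:
\begin{enumerate}
\item the transverse case, with Corollary~\ref{cor:Bed-trans} on $\{y-x=0\}$ and Corollary~\ref{cor:Bed-T} on $\{y=0\}$, $\{y-ax=0\}$;
\item the generic invariant and non-invariant cases, via Corollary~\ref{cor:Bed-T} together with either $\ell\subset C_{\Hcal}$ or Corollary~\ref{cor:Bed-noninv} (the latter is the paper's system $E_1,\dots,E_4$);
\item the shared-fibre case, via Theorem~\ref{thm:Bed-Dl}.
\end{enumerate}
As in the paper, the resulting systems are solved exactly and saturated by the non-degeneracy conditions. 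Your use of the affine symmetries fixing $\infty$ to reduce the shared-fibre case to the fibre of $[0:1]$ is left implicit in the paper. The same holds for your chart change when $\GH([p:1])=\infty$. These are refinements, not a different argument.
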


\begin{proof}
The Gauss map has three non-fixed critical points $[0:1]$, $[1:1]$, $[a:1]$, each of order $1$, and one fixed critical point $\infty$. Setting $\GH([0:1])=[\theta:1]$, $\GH([1:1])=[\lambda:1]$, $\GH([a:1])=[\gamma:1]$,
\[
\omega_{\Hcal}=\bigl(a_3y^3+a_2y^2x-\theta(yx^2+b_0x^3)\bigr)\,dx+(yx^2+b_0x^3)\,dy,
\]
with $b_0=-\tfrac{2a_3+a_2}{3a_3+2a_2}$ and explicit expressions for $a$, $\gamma$, $\lambda$. The transverse case ($\ell=\{y-x=0\}$, treated by Corollary~\ref{cor:Bed-trans}, together with Corollary~\ref{cor:Bed-T} applied to $T=\{y-ax=0\}$) and $N=\{y=0\}$ gives case~(1). For $\ell=\{y-px=0\}$ not in $I_{\Hcal}^{\Tr}$ and $[p:1]$ not in any of the relevant fibers, applying Corollary~\ref{cor:Bed-T} to $N=\{y=0\}$, $M=\{y-x=0\}$, $T=\{y-ax=0\}$, the cases $\ell\subseteq C_{\Hcal}$ produce (2), (3), (4), (5). The case $\GH([p:1])=[\theta:1]$ produces case (6). For $\ell$ non-invariant, Corollary~\ref{cor:Bed-noninv} together with Corollary~\ref{cor:Bed-T} applied to $M$, $N$, $T$ yields the system
\begin{align*}
E_1 &:= (a-p)(a_2+2a_3a+\gamma a_3) + (a_2+3a_3a)(\gamma-p),\\
E_2 &:= (1-p)(a_2+(2+\lambda)a_3) + (a_2+3a_3)(\lambda-p),\\
E_3 &:= -p(a_2+\theta a_3) + a_2(\theta-p),\\
E_4 &:= 3(a_2+a_3p)p + 2(p_0-\theta) + p_0(3a_3p+a_2),
\end{align*}
where $[p_0:1]=\GH([p:1])$. Eliminating $a_2,a_3,p,\theta$ from $E_1=E_2=E_3=E_4=0$ (by the computational resolution of the system, discarding the components on which $\gcd(A,B)\neq 1$, a critical point collides, or a denominator vanishes) yields exactly four admissible families, namely cases~(7),~(8),~(9) and~(10); the conditions for eliminating unwanted system solutions are recorded in Appendix~\ref{app:reproducibility}. In particular there are no further admissible solutions, in accordance with the ``if and only if'' assertion of the proposition and with Theorem~\ref{thm:main}.
\end{proof}

\subsection{Type \texorpdfstring{$4\cdot T_1$}{4T\_1}}

\begin{proposition}\label{prop:degT4-4T1}
Let $\ell\boxprod\Hcal$ be a homogeneous pre-foliation with $\Tcal_{\Hcal}=4\cdot T_1$. Then, up to projective automorphism, the pre-foliation is defined by
\[
(y-px)\bigl((a_3y^3+a_2y^2x-\theta(yx^2+b_0x^3))\,dx+(yx^2+b_0x^3-\epsilon(a_3y^3+a_2y^2x))\,dy\bigr),
\]
with $b_0=-\tfrac{2a_3+a_2}{3a_3+2a_2}$, $\GH([1:0])=[\epsilon:1]$, $\GH([0:1])=[\theta:1]$, and
\[
p=\zeta,\quad a_2=1,\quad a_3=\zeta-1,\quad \epsilon=3-3\zeta,\quad \theta=\zeta/2,
\]
where $\zeta$ is a root of $3Z^2-3Z+1$.
\end{proposition}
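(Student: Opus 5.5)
The plan follows the scheme of Section~\ref{sec:strategy}, as in Propositions~\ref{prop:degT4-1R13T1} and~\ref{prop:degT4-2R12T1}.

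\textbf{Normal form.} When $\Tcal_{\Hcal}=4\cdot T_1$, the Gauss map $\GH$ has four simple critical points, none of them fixed. By a M\"obius change of the $(x,y)$-variables we place three of them at $\infty=[1:0]$, $[0:1]$ and $[1:1]$; the fourth is then some $[a:1]$. We write $\GH([1:0])=[\epsilon:1]$, $\GH([0:1])=[\theta:1]$, $\GH([1:1])=[\lambda:1]$ and $\GH([a:1])=[\gamma:1]$. Imposing simple ramification at $[0:1]$, $[1:0]$ and $[1:1]$ puts $\omega_{\Hcal}$ in the displayed shape, with $B$ having a double root at the relevant place, the symmetric term $-\epsilon(a_3y^3+a_2y^2x)$, and the relation $b_0=-\tfrac{2a_3+a_2}{3a_3+2a_2}$ coming from the condition at $[1:1]$. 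The parameters $a,\lambda,\gamma$ are then rational functions of $(a_3,a_2,\theta,\epsilon)$. Rescaling $y$ (equivalently, the residual freedom fixing $[a:1]$) lets us normalize $a_2=1$ whenever $a_2\neq0$; the case $a_2=0$ is handled separately and will be discarded by $\gcd(A,B)=1$ or by collision of critical points. We also compute $C_{\Hcal}$ and $D_{\Hcal}=\{xy(x-y)(x-ay)\cdot\mathrm{const}=0\}$, so that $I^{\Tr}_{\Hcal}$ consists of exactly the four transverse lines.

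\textbf{Case analysis on $\ell$.}
\begin{enumerate}
\item $\ell=\ell_\infty$: this is already covered by Proposition~\ref{prop:Linf}, and none of the eleven forms there has type $4\cdot T_1$, which we check from their types in~\citep{BM18}.
\item $\ell$ one of the four inflection lines: by the residual symmetry we may take $\ell=\{y-x=0\}$. We apply Corollary~\ref{cor:Bed-trans} to $\ell$ and Corollary~\ref{cor:Bed-T} to the other three lines.
\item $\ell=\{y-px=0\}$ generic with respect to the fibers: we apply Corollary~\ref{cor:Bed-T} to all four lines, which gives four equations. We then add either invariance of $\ell$ ($p$ a root of $C_{\Hcal}$) or Corollary~\ref{cor:Bed-noninv}.
\item $\GH([p:1])$ equal to one of $\epsilon,\theta,\lambda,\gamma$: we apply Theorem~\ref{thm:Bed-Dl} to $\ell$ together with that line, and Corollary~\ref{cor:Bed-T} to the remaining three lines. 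Where a Gauss value is $\infty$, we change charts as in case~(5').
\end{enumerate}

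Each case yields a polynomial system in $(p,a_3,a_2,\theta,\epsilon)$, where the count of unknowns already reflects the normalization. Generically the number of equations is at least the number of unknowns, so we expect finitely many solutions or none.

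\textbf{Main obstacle.} The hard step is solving these systems exactly and pruning them. Following Appendix~\ref{app:reproducibility}, I would clear denominators and compute Gr\"obner bases or resultants in \textsc{Maple}. I would then saturate by the discriminant conditions: $\mathrm{Res}(A,B)\neq0$, the four critical points distinct and non-fixed ($\epsilon\neq\infty$, $\theta\neq0$, $\lambda\neq1$, $\gamma\neq a$), and the denominators $3a_3+2a_2\neq0$ and $\alpha b_i+\beta a_i\neq0$. I expect every branch to collapse except one, the case where $\ell$ is a non-invariant line whose Gauss image lies in a fiber containing a critical point, or the generic non-invariant case. That branch should give $a_2=1$, $a_3=\zeta-1$, $\epsilon=3-3\zeta$, $\theta=\zeta/2$, $p=\zeta$ with $3Z^2-3Z+1=0$.

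\textbf{Finishing.} The two conjugate roots $\zeta$ must be compared. A residual M\"obius map permuting the four critical points might identify the two roots; otherwise they are kept as two classes. Flatness of the resulting web is then confirmed by Theorem~\ref{thm:Bed-flatness}, since holomorphy along every component of $\GH(I^{\Tr}_{\Hcal}\cup\ell)$ is exactly what the solved equations encode.
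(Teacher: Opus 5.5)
Your overall scheme (normal form, case split on $\ell$, Bedrouni's criteria, exact elimination) is the paper's. The genuine gap is the normalization $a_2=1$, which is not legitimate and breaks the completeness half of the argument.

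Once three critical points are placed at $\infty,[0:1],[1:1]$, and the scalar on $\omega$ has been used to make the coefficient of $yx^2$ in $B$ equal to $1$, no continuous freedom is left. Rescaling $y$ moves $[1:1]$, and $a$ is a cross-ratio of the four critical points, not something you can fix. Concretely, put $U=y^2(a_3y+a_2x)$ and $V=x^2(y+b_0x)$. The slope map of $\Hcal$ is then $g=(h+\theta)/(1+\epsilon h)$ with $h=-U/V$. Replacing $(a_3,a_2)$ by $c(a_3,a_2)$ leaves $b_0$, $a$ and all four critical points unchanged, but turns $g$ into $(ch+\theta)/(1+\epsilon ch)$. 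Only finitely many M\"obius maps preserve $\{\infty,0,1,a\}$, so $c$ is a genuine modulus.

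Thus $(a_3,a_2,\theta,\epsilon)$ are four honest moduli. This matches the count: homogeneous cubic foliations up to linear changes of $(x,y)$ are cubic rational maps of $\Pone$ up to conjugacy, a $4$-dimensional space whose generic stratum is $4\cdot T_1$. Setting $a_2=1$ therefore searches only a hypersurface. Neither the emptiness of the other branches nor uniqueness in the surviving branch would follow. The paper keeps five unknowns $(p,a_3,a_2,\theta,\epsilon)$, and $a_2=1$ comes out of the elimination; compare Proposition~\ref{prop:degT4-2R12T1}, where $a_2$ takes the values $1/4,-1,-1/2,\dots$.

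There are two further corrections.

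\textbf{Step (1) is false.} $\omega_{11}$ in Proposition~\ref{prop:Linf} has type $4\cdot T_1$. For it, $g(t)=-A(1,t)/B(1,t)$ has $g'$ with numerator proportional to $t(t^2+\sqrt{3}\,t+1)$, of degree $3$. So the critical points are $0$, $\infty$ and $t_\pm=(-\sqrt{3}\pm i)/2$, all simple, with $g(0)=-\sqrt{3}$, $g(\infty)=-1/\sqrt{3}$ and $g(t_\pm)=\mp i$; none is fixed. Hence $\ell_\infty\boxprod\Hcal_{11}$ is flat of this type but is not on the list: an equivalence must fix $O$, the only singularity of Milnor number $9$. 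The proposition concerns lines through $O$, and the paper's proof simply leaves $\ell_\infty$ to Proposition~\ref{prop:Linf}.

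\textbf{The surviving branch is not a non-invariant one.} With $3\zeta^2-3\zeta+1=0$ you get $\zeta+b_0=(3\zeta^2-3\zeta+1)/(3\zeta-1)=0$ and $1-\zeta\epsilon=3\zeta^2-3\zeta+1=0$. Hence
$(xA+yB)(1,\zeta)=(a_3\zeta^3+a_2\zeta^2)(1-\zeta\epsilon)+(\zeta+b_0)(\zeta-\theta)=0$, so $\ell$ is invariant. The paper obtains it in its invariant sub-case; its transverse, non-invariant and $\GH([p:1])=[\theta:1]$ branches only give $\epsilon=0$.

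Note, though, that $1/\epsilon=\zeta$ is the slope of $\Hcal$ along $\{x=0\}$, so $\check{\ell}$ lies on $\GH(\{x=0\})$. This invariant line therefore sits in a special fiber. Your case (3) excludes that configuration. Your case (4) invokes Theorem~\ref{thm:Bed-Dl}, which concerns the curve $D_\ell$, but $D_\ell$ is a point when $\ell$ is invariant. You must therefore say how holomorphy along $\GH(\{x=0\})$ is tested there. The paper applies Corollary~\ref{cor:Bed-T} to $\{x=0\}$ even though its hypothesis $[-\alpha:\beta]\notin\GH^{-1}(\GH([1:0]))$ fails.
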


\begin{proof}
The Gauss map has four non-fixed critical points placed at $\infty$, $[0:1]$, $[1:1]$, $[a:1]$, each of order~$1$. Setting $\GH([1:0])=[\epsilon:1]$, $\GH([0:1])=[\theta:1]$, $\GH([1:1])=[\lambda:1]$, $\GH([a:1])=[\gamma:1]$, the foliation is
\[
\omega_{\Hcal}=\bigl(a_3y^3+a_2y^2x-\theta(yx^2+b_0x^3)\bigr)\,dx+\bigl(yx^2+b_0x^3-\epsilon(a_3y^3+a_2y^2x)\bigr)\,dy,
\]
with $b_0=-\tfrac{2a_3+a_2}{3a_3+2a_2}$, $a=-\tfrac{a_2(2a_3+a_2)}{a_3(3a_3+2a_2)}$, $\gamma=-\tfrac{3a_3a^2+2a_2a-\theta}{1-\epsilon(3a_3a^2+2a_2a)}$ and $\lambda=-\tfrac{3a_3+2a_2-\theta}{1-\epsilon(3a_3+2a_2)}$.

For $\ell=\{y-x=0\}$ a transverse inflection line, Corollary~\ref{cor:Bed-trans} together with Corollary~\ref{cor:Bed-T} applied to $L=\{x=0\}$, $N=\{y=0\}$, and $T=\{y-ax=0\}$ admits no solution with $\epsilon\neq 0$. For $\ell=\{y-px=0\}$ with $[p:1]$ avoiding all relevant fibers, applying Corollary~\ref{cor:Bed-T} to $L,N,M=\{y-x=0\},T$ yields the unique invariant solution stated in the proposition. The non-invariant case (Corollary~\ref{cor:Bed-noninv}) and the case $\GH([p:1])=[\theta:1]$ admit no admissible solution (all forced $\epsilon=0$).
\end{proof}

\medskip

\section{Conclusions}
The classification was organized according to the type $\Tcal_{\Hcal}$ of the underlying homogeneous foliation $\Hcal$ of degree~$3$, resulting in a finite list of pre-foliations of co-degree~$1$ and degree~$4$ with flat Legendre transform.

The role of the type is structural. The degree of $\Tcal_{\Hcal}$ measures how ramified the Gauss map $\GH$ is: the case $\deg(\Tcal_{\Hcal})=2$, previously settled in~\citep[Proposition~4.4]{Bed23}, corresponds to the most degenerate Gauss maps, where the added line is essentially forced to be the line at infinity or an inflection line of a foliation with double radial or inflection data; the new cases $\deg(\Tcal_{\Hcal})=3$ (Proposition~\ref{prop:degT3}, six admissible types) and $\deg(\Tcal_{\Hcal})=4$ (Propositions~\ref{prop:degT4-3R1T1}--\ref{prop:degT4-4T1}, four admissible types) correspond to Gauss maps with progressively simpler ramification and a correspondingly richer supply of admissible lines. Concretely, each admissible type fixes a normal form of $\Hcal$ up to the residual automorphisms of $\Pone$, and the flatness criteria then select finitely many lines $\ell$; counting one pre-foliation per admissible pair modulo those residual automorphisms, Propositions~\ref{prop:degT3}--\ref{prop:degT4-4T1} exhibit the complete list of projective-equivalence classes, the equivalences between listed forms being realized by the explicit projective automorphisms indicated in the proofs. The type $4\cdot T_1$ is the most rigid: it yields a single pair of pre-foliations, both defined over $\CC$ by the quadratic $3Z^2-3Z+1$.

Together with~\citep[Proposition~4.4]{Bed23} and Propositions~\ref{prop:Linf}, \ref{prop:degT3}, \ref{prop:degT4-3R1T1}, \ref{prop:degT4-2R12T1}, \ref{prop:degT4-1R13T1}, \ref{prop:degT4-4T1}, the classification of homogeneous pre-foliations of co-degree~$1$ and degree~$4$ on $\Ptwo$ with flat Legendre transform is complete, establishing Theorem~\ref{thm:main}.

\section{Author contributions}
In this study, all authors actively contributed to the research, formal analysis, validation and writing. Conceptualization: Carla Pracias, Maycol Falla Luza.  Formal Analysis: Carla Pracias, Maycol Falla Luza. Investigation: Carla Pracias, Maycol Falla Luza. Writing – Original Draft Preparation: Carla Pracias, Maycol Falla Luza.

\section{Acknowledgments}

The authors thank D.~Marín for several enlightening discussions. The first-named author was supported by a CAPES fellowship.

\appendix

\section{Exact data, admissibility conditions, and reproducibility}\label{app:reproducibility}

This appendix collects the exact algebraic data underlying the abbreviated cases, so that the finite classification of Sections~\ref{sec:degT3} and~\ref{sec:degT4} can be reproduced without recourse to external sources.

\subsection*{Admissibility (non-degeneracy) conditions}
For each normal form $\omega_{\Hcal}=A\,dx+B\,dy$ appearing in Propositions~\ref{prop:degT3}--\ref{prop:degT4-4T1}, the parameters are required to satisfy the following open conditions, which are imposed throughout and under which the stated type is exactly the type of $\Hcal$:
\begin{enumerate}[leftmargin=*,label={\rm(A\arabic*)}]
\item $\gcd(A,B)=1$, equivalently the resultant $\operatorname{Res}_{[x:y]}(A,B)\neq 0$; this excludes common factors of $A$ and $B$ and guarantees that $\Hcal$ is saturated of the prescribed degree;
\item the prescribed critical points of $\GH$ (the points $\infty,[0:1],[1:1],[a:1]$, with $a$ as given in each proof) are pairwise distinct, i.e.\ $a\notin\{0,1,\infty\}$ and the denominators defining $a$ do not vanish;
\item the denominators appearing in the defining relations (for instance $3a_3+2a_2$, $2\zeta+1$, $\alpha b_i+\beta a_i$, and $B(b_i,a_i)$) are non-zero, so that no evaluation in the flatness criteria of Section~\ref{sec:criteria} is indeterminate;
\item the added line $\ell$ is not a repeated component of $I_{\Hcal}$, and, in the invariant sub-cases, $\ell$ is a reduced component of $C_{\Hcal}$; in the non-invariant sub-cases $\ell\not\subset I_{\Hcal}$.
\end{enumerate}
In each proposition the parameter values were saturated with respect to (A1)--(A4); the finitely many solutions of the defining systems that violate one of these conditions (for example the spurious solution $p=1,\lambda=1$ in case~(2) of Proposition~\ref{prop:degT3}, for which $\ell$ becomes transverse to $\Hcal$) are discarded and do not appear in the lists.

\subsection*{The elimination system of Proposition~\ref{prop:degT4-1R13T1}}
In the non-invariant sub-case of Proposition~\ref{prop:degT4-1R13T1}, the admissible pre-foliations are the common zeros of $E_1,E_2,E_3,E_4$ (as displayed in the proof) subject to $(A1)$--$(A4)$. Eliminating $a_2,a_3,\theta$ and saturating by the non-degeneracy conditions leaves a one-dimensional ideal in $p$ whose two associated primes give cases~(7) and~(8): case~(7) is defined over $\CC$ by $\zeta$ a root of $8Z^2-17Z+20$, case~(8) is the family with $a_2=\zeta/4$, $a_3=\xi$, where $\zeta$ is a root of
\[
4Z^2-5Z-5
\]
and $\xi$ is a root of the quadratic over $\Q(\zeta)$
\[
192Z^2+\bigl(48\zeta-36\bigr)Z-\zeta+5 ,
\]
case~(9) is the family with $a_2=\zeta/2$, $a_3=\xi$, where $\zeta$ is a root of
\[
2Z^3-3Z^2+10
\]
and $\xi$ is a root of the quadratic over $\Q(\zeta)$
\[
6Z^2+\bigl(6\zeta-3\bigr)Z+2\zeta^2-2\zeta ,
\]
and case~(10) is the family with $a_2=\zeta/2$, $a_3=\xi$, where $\zeta$ is a root of
\[
20Z^3-12Z^2+1
\]
and $\xi$ is a root of the quadratic over $\Q(\zeta)$
\[
15Z^2+\bigl(15\zeta-3\bigr)Z+5\zeta^2-2\zeta  .
\]
For each such pair $(\zeta,\xi)$ the remaining parameters $p$ and $\theta$ are the (unique) $\Q(\zeta)$-affine functions of $\xi$ determined by $E_1=\dots=E_4=0$; their explicit closed forms coincide with those of~\citep[Proposition~43((8), (9), (10))]{Pra25} and are provided verbatim in the supplementary \textsc{Maple} worksheet described below.

\subsection*{Reproducibility and data availability}
The full symbolic verification of Propositions~\ref{prop:degT3}--\ref{prop:degT4-4T1}---the construction of each normal form and the evaluation of the criteria of Section~\ref{sec:criteria}---is carried out in a \textsc{Maple} worksheet, provided as supplementary material to this article and also available in the public repository accompanying~\citep{Pra25} at \url{[repository URL to be inserted]}. The worksheet reproduces, in particular, the explicit rational expressions of Proposition~\ref{prop:degT4-3R1T1}(9) and Proposition~\ref{prop:degT4-1R13T1}((8), (9), (10)).

\subsection*{Solution testing and representative \textsc{Maple} computations}

The polynomial systems arising in the case-by-case analysis were solved using the computer algebra system \textsc{Maple}. For each system, the admissible solutions were obtained after excluding the degenerate and indeterminate cases that violate the non-degeneracy conditions $(A1)$--$(A4)$. As a representative example, we provide the \textsc{Maple} worksheet used to determine the admissible solutions in the case $3\cdot R_1 + 1\cdot T_1$ when the line $l$ is invariant under the foliation (Proposition~\ref{prop:degT4-3R1T1}, cases~(2)--(5)).\\

\footnotesize
\begin{verbatim}
with(LinearAlgebra);

A:=a3*y^3+a2*y^2*x;
B:=b0*x^3+y*x^2;
a2:=(-1-3*a3)/2;
b0:=-(a3+a2+1);
a:=-(3*a3^2 - 2*a3 - 1)/(4*a3);
Gamma:=-a*(3*a3*a+2*a2);
n:=2;

CH:=factor(x*A+y*B);
DH:=factor(diff(A,x)*diff(B,y)-diff(A,y)*diff(B,x));

F:= Matrix([[A, eval(A,[x=1,y=a])], [B, eval(B,[x=1,y=a])]]);
P:=factor(Determinant(F)/(y-a*x)^n);

G:=Matrix([[diff(P,x),A],[diff(P,y),B]]);
Q:=(n-2)*(diff(B,x)-diff(A,y))*P+2*(n+1)*Determinant(G);

Eq:=eval((-p+1*a)*Q+3*n*(-p*B-A)*P,[x=1,y=a]);
solve(Eq);

solve(-(18*a3^4 - 3*a3^3 - 9*a3^2 - 5*a3 - 1)/(2*a3*(3*a3^2 + 5)));
              

eval(CH,[x=1,y=p]);
                                      

solve([p*(1 - p)*(-2*a3*p + a3 - 1)/2,
p +(18*a3^4 - 3*a3^3 - 9*a3^2 - 5*a3 - 1)/(2*a3*(3*a3^2 + 5))]);
\end{verbatim}

\normalsize

The resulting solutions can then be verified symbolically within \textsc{Maple}. More precisely, for each admissible solution, one checks that the corresponding differential $1$-form defines a pre-foliation and that the curvature of its Legendre transform vanishes identically. As an illustration of this verification procedure, we include the \textsc{Maple} worksheet corresponding to case~(3) of Proposition~\ref{prop:degT4-3R1T1}.\\

\footnotesize
\begin{verbatim}
with(LinearAlgebra);
K:=proc(F) local L,l,d,U,V,s,t,g,delta,i,j,AA,eta,LL,K,q:
d:=degree(F,p):LL:=0:
delta:=diff(F,x)+p*diff(F,y): 
for i from 0 to d-3 do:
gcdex(F,diff(F,p),p^i*delta,p,'U[i]','V[i]'):
#print("i=",i):print("U=",U[i]):print("V=",V[i]):
od:
#print("Discriminante=",factor(discrim(F,p))):
#print("Comprobacion:"):print(simplify(U[0]*F+V[0]*diff(F,p)-delta)):
#print("El polinomio de linealizacion es:"): print(collect(-V[0],p,factor)):
V[0]:=collect(V[0],p,simplify):
for i from 0 to 3 do l[i]:=coeff(V[0],p,i) od:
L[1]:=simplify(-diff(diff(l[2],x)-2*diff(l[1],y),x)-l[1]*(diff(l[2],x)-2*diff(l[1],y))
-3*diff(l[0],y$2)-3*diff(l[2]*l[0],y)+3*diff(l[0]*l[3],x)+3*l[0]*diff(l[3],x)):
L[2]:=simplify(diff(2*diff(l[2],x)-diff(l[1],y),y)-l[2]*(2*diff(l[2],x)-diff(l[1],y))
-3*diff(l[3],x$2)+3*diff(l[1]*l[3],x)-3*diff(l[0]*l[3],y)-3*l[3]*diff(l[0],y)):
if degree(-V[0],p)<4 and (L[1])=0 and (L[2])=0 then LL:=1: 
print("web linealizable") else print ("web no linealizable"):
#print(degree(-V[0],p)-4,L[1],L[2]) 
#print(degree(-V[0],p)-4)
fi:
for i from 1 to d-1 do for j from 1 to d-2 do 
AA[i,j]:=-simplify(coeff(U[d-2-j],p,i-1)+i*coeff(V[d-2-j],p,i)) od od:
#print("La matriz A[i,j] de la conexion de Henaut es:"):
print([seq([seq(AA[i,j],j=1..d-2)],i=1..d-1)]):
eta:=collect(add((AA[q,d-q-1]*dx+AA[d-q,q]*dy)*(d-q-1),q=1..d-2),[dx,dy]):
#print("La traza de la matriz de la conexion es:",collect(eta,[dx,dy],factor)):
K:=factor(diff(coeff(eta,dy),x)-diff(coeff(eta,dx),y)):
#print("La curvatura es:",K):
#if LL=1 and K=0 then print("web algebrizable") else print("web no algebrizable") fi:
#return(factor(diff(coeff(eta,dy),x)-diff(coeff(eta,dx),y)))
#return([seq(seq(A[i,j]=unapply(factor(AA[i,j]),[x,y]),j=1..d-2),i=1..d-1)]);
#return([L[1],L[2]]):
return(K)
end proc:

omega := (-p__0*x + y)*((a3*y^3+a2*y^2*x)*dx + (b0*x^3+y*x^2)*dy);
eval(omega, {dx = 1, dy = p, y = p*x - q, a3 = RootOf(6*_Z^2 - 3*_Z + 1), 
b0=-(2*a3+a2)/(3*a3+2*a2), a2=(-1-3*a3)/2});
eval(%, {p = x, q = y, x = p});
collect(%, p);
K(%);eval(%,p__0=1);
\end{verbatim}

\end{document}